
\documentclass[11pt]{article}
\usepackage{graphicx}
\usepackage{verbatim}
\usepackage{fullpage}
\usepackage{hyperref}
\usepackage{amssymb}
\usepackage{amsmath}
\usepackage{amsthm}

\def\final{0}  
\def\iflong{\iffalse}
\ifnum\final=0  
\newcommand{\knote}[1]{[{\tiny Karthik: \bf #1}]\marginpar{*}}
\newcommand{\sidecomment}[1]{}
\else 
\newcommand{\knote}[1]{}
\newcommand{\sidecomment}[1]{}
\fi  

\newtheorem{theorem}{Theorem}

\newtheorem{lemma}[theorem]{Lemma}

\newtheorem{definition}[theorem]{Definition}

\def\reals{\mathbb{R}}

\def\Z{\mathbb{Z}}
\def\E{\mathbb{E}}

\def\H{\mathcal{H}}
\def\C{\mathbb{C}}

\def\det{\mathrm{det}}

\def\Ht{\{0,1,2\}}
\def\sgn{\mathrm{sgn}}
\def\sym{\mathrm{sym}}
\def\poly{\mathrm{poly}}

\title{Towards Constructing Ramanujan Graphs Using Shift Lifts}
\author{Karthekeyan Chandrasekaran \thanks{Department of Industrial and Enterprise Systems Engineering, University of Illinois, Urbana-Champaign. {\tt karthe@illinois.edu}. Part of this work was done while the author was a Simons postdoctoral fellow at Harvard University. }  \and Ameya Velingker\thanks{Computer Science Department, Carnegie Mellon University. {\tt avelingk@cs.cmu.edu}. Part of this work was done while the author was a visiting research fellow at Harvard University. Research supported in part by NSF grant CCF-0963975.}}
\date{}


\begin{document}
\maketitle
\begin{abstract}
In a breakthrough work, Marcus et al. \cite{MSS13} recently showed that every $d$-regular bipartite Ramanujan graph has a 2-lift that is also $d$-regular bipartite Ramanujan. As a consequence, a straightforward iterative brute-force search algorithm leads to the construction of a $d$-regular bipartite Ramanujan graph on $N$ vertices in time $2^{O(dN)}$. 
Shift $k$-lifts studied in \cite{AKM} lead to a natural approach for constructing Ramanujan graphs more efficiently. The number of possible shift $k$-lifts of a $d$-regular $n$-vertex graph is $k^{nd/2}$. Suppose the following holds for $k=2^{\Omega(n)}$:
\begin{align}
\mbox{There} & \mbox{ exists a shift $k$-lift that maintains the Ramanujan property of}\notag\\ 
 & \mbox{$d$-regular bipartite graphs on $n$ vertices for all $n$.}\tag{$\star$}\label{conj:shift-lift}
\end{align}
Then, by performing a similar brute-force algorithm, one would be able to construct an $N$-vertex bipartite Ramanujan graph in time $2^{O(d\, {\log^2 N})}$. 
Also, if (\ref{conj:shift-lift}) holds for all $k\geq 2$, then one would obtain an algorithm that runs in $\poly_d(N)$ time.
In this work, we take a first step towards proving (\ref{conj:shift-lift}) by showing the existence of shift $k$-lifts that preserve the Ramanujan property in $d$-regular bipartite graphs for $k=3$, $4$. 



\end{abstract}

\section{Introduction}

Expander graphs have generated much interest during the last several decades in many areas such as network design, cryptography, complexity theory and coding theory. The ability to efficiently construct expander graphs has widespread applications (\cite{Spi96}, \cite{AKS83}, \cite{ABN92}, \cite{Has99}, \cite{Guru09}, \cite{crypto}, \cite{Zem94}, \cite{Din07}). 
Sparse expander graphs are significant from the perspective of these applications. In particular, the expansion properties of regular graphs have been well-studied. 
For $d$-regular graphs $G$, the largest eigenvalue of the adjacency matrix $A_G$ is $d$ and it is referred to as the \emph{trivial} eigenvalue.  
A large difference between $d$ and the largest (in absolute value) non-trivial eigenvalue $\lambda$ of $A_G$ implies better expansion. 
By the Alon-Boppanna bound \cite{Nil91}, we have that $\lambda \ge 2\sqrt{d-1}-o(1)$ as the graph size increases. 
Consequently, graphs with $\lambda\le 2\sqrt{d-1}$ are optimal expanders. Such graphs are known as \emph{Ramanujan} graphs.

Until recently, Ramanujan graphs were known to exist only for very restricted values of the degree $d$ \cite{LPS88}, \cite{Mar88}, \cite{Piz90}, \cite{Chi92}, \cite{JL97}, \cite{Mor94}. In a breakthrough work, Marcus et al. \cite{MSS13} showed the existence of an infinite family of $d$-regular bipartite Ramanujan graphs for every $d\ge 2$. The work used a graph operation known as a \emph{2-lift} that was introduced by Bilu and Linial \cite{BL06}. The 2-lift operation doubles the size of the graph while preserving the $d$-regular and bipartiteness properties. Marcus et al. showed that there exists a 2-lift that also preserves the Ramanujan property of every Ramanujan base graph. Thus, their result shows the existence of an infinite family of $d$-regular bipartite Ramanujan graphs containing $N$ vertices, where $N=2^{i}n$ for every $i=0,1,2,\ldots$, and $n$ is the number of vertices in an arbitrary $d$-regular bipartite Ramanujan graph. In particular, the complete bipartite graph on $2d$ vertices, $K_{d,d}$, is a $d$-regular bipartite Ramanujan graph that can be used as the base graph with $n$ vertices. 
The best currently-known algorithm for constructing an $N$-vertex Ramanujan graph for \emph{arbitrary} degree $d$ following the existential proof of Marcus et al. \cite{MSS13} is by a brute-force search: Start with $K_{d,d}$ as the base graph and iteratively find a $2$-lift of the current graph such that all the new eigenvalues of the lifted graph are at most $2\sqrt{d-1}$; in order to obtain an $N$-vertex graph, we need to perform $\log_2 (N/2d) $ iterations. The running time of the algorithm is dominated by the brute-force search over $2^{O(dN)}$ possible 2-lifts in the final iterative step (along with a $\poly(N)$-time check for each possible 2-lift to verify whether the eigenvalues are all $\leq 2\sqrt{d-1}$).

The family of \emph{shift $k$-lifts} gives a natural approach for a faster construction of an $N$-vertex $d$-regular bipartite Ramanujan graph. An extension of the 2-lift operation, the shift $k$-lift is a graph operation that increases the number of vertices of a graph by a factor of $k$ while preserving the $d$-regular and  bipartiteness properties of the base graph.  The existence of a shift $k$-lift that maintains the Ramanujan property of $d$-regular bipartite graphs on $n$ vertices for $k=2^{\Omega(n)}$ is sufficient to construct, for all $N$, an $N$-vertex bipartite Ramanujan graph in time $2^{O(d \log^2 N)}$ by a straightforward brute-force search: Again, start with a $(\log N)$-vertex Ramanujan graph as the base graph and do one shift lift for $k = N/\log N$. The $(\log N)$-vertex Ramanujan base graph can be obtained by starting from $K_{d,d}$ and repeating the same construction recursively (or by repeatedly finding admissible 2-lifts).

Furthermore, note that if it is possible to prove the existence of a shift $k$-lift of any $d$-regular bipartite Ramanujan graph that preserves the Ramanujan property for \emph{every} $k\geq 2$, then we would be able to obtain $\Omega(N)$-vertex bipartite Ramanujan graphs in time $(N/d)^{O(d^2)}$, i.e., \emph{polynomial} in $N$, by performing a single brute-force search to find an admissible shift $k$-lift of $K_{d,d}$ for $k = O(N/d)$.

In this work, we take a first step towards this existential result by showing it for $k=3$ and $k=4$. We note that the case of $k=3$ was also shown simultaneously in \cite{LPV14}.

The structure of this work is as follows: Section~\ref{sec:definitions} provides an overview of the main theorem along with the definitions as well as the main ideas that will be used in the proof. Section \ref{sec:prelims} gives the definitions, notation and some of the results that will be used in the proof of the main result. Section~\ref{sec:shift3lift} shows that the expected characteristic polynomial for uniformly random shift 3-lifts is the \emph{matching polynomial} of the base graph, whose roots are well-behaved.
Section~\ref{sec:shift4lift} proves that the expected characteristic polynomial for a subset of uniformly random shift $4$-lifts is once again the \emph{matching polynomial} of the base graph. 
Subsequently, Section~\ref{sec:interlacing} uses the results of the previous sections along with the \emph{method of interlacing polynomials} to prove the main theorem. Finally, Section~\ref{sec:discussion} discusses the main result and possible extensions to consider.



\subsection{Main Results}\label{sec:definitions}
The maximum and minimum eigenvalues of the adjacency matrix of a $d$-regular bipartite graph $G$ are $d$ and $-d$. These eigenvalues are known as the trivial eigenvalues of $G$. A $d$-regular graph $G$ is known to be \emph{Ramanujan} if every non-trivial eigenvalue of the adjacency matrix of $G$ lies between $-2\sqrt{d-1}$ and $2\sqrt{d-1}$. A $k$-lift of a graph $G=(V,E)$ is a graph $H$ obtained as follows: for each vertex $v\in V$, create $k$ copies of $v^1,\ldots, v^k$ in $H$; orient the edges of $G$ arbitrarily and for each edge $(u,v) \in E$, pick a permutation $\sigma_{uv} \in S_k$ and add edges $u^iv^{\sigma_{uv}(i)}$ to $H$. We consider a strict subset of $k$-lifts known as \emph{shift $k$-lifts} as defined by Agarwal et al. \cite{AKM}. Shift $k$-lifts are those lifts for which the permutation for every edge corresponds to a shift permutation, namely, there exists a \emph{shift function} $s: E\to [k]$ such that $\sigma_{uv}(i) = (i+s(u,v)) \bmod k$ for all $(u,v)\in E$. We will often refer to a shift $k$-lift by its shift function $s:E \to [k]$.

Our main result is stated below.

\begin{theorem}\label{3-4-lifts-preserve-Ramanujan}
If $G=(V,E)$ is a $d$-regular bipartite Ramanujan graph, then there exist a shift $3$-lift and a shift $4$-lift of $G$, both of which are also $d$-regular, bipartite, and Ramanujan.
\end{theorem}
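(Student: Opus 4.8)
\medskip

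\noindent\textbf{Proof plan.} The strategy is to run the method of interlacing polynomials of Marcus, Spielman and Srivastava~\cite{MSS13} over the family of shift $k$-lifts, separately for $k=3$ and $k=4$.

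First I would record the spectral decomposition of a shift $k$-lift. Since the permutations used are powers of the cyclic shift $C\in\{0,1\}^{k\times k}$, and $C$ is diagonalized by the Fourier transform on $\Z_k$, the adjacency matrix $A_H$ of the lift $H$ given by a shift function $s\colon E\to[k]$ becomes, after the unitary change of basis $I_{|V|}\otimes F$,
\[
A_H \;\cong\; \bigoplus_{j=0}^{k-1} B_j(s), \qquad \big(B_j(s)\big)_{uv} = \omega^{\,j\,s(u,v)} \text{ for } (u,v)\in E,
\]
where $\omega=e^{2\pi i/k}$, each $B_j(s)$ is $|V|\times|V|$ Hermitian (fix an orientation of $E$ and set $(B_j)_{vu}=\overline{(B_j)_{uv}}$), and $B_0(s)=A_G$. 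So the ``old'' eigenvalues of $H$ are exactly those of $G$, and the ``new'' eigenvalues form the union of the spectra of $B_1(s),\dots,B_{k-1}(s)$. As $B_{k-j}(s)=\overline{B_j(s)}$ is cospectral with $B_j(s)$, it suffices to control $B_1(s)$ when $k=3$ and $B_1(s),B_2(s)$ when $k=4$; and since $G$ is bipartite, conjugating $B_j(s)$ by the $\pm1$ diagonal of the bipartition negates it, so $\mathrm{spec}(B_j(s))$ is symmetric about $0$ and it is enough to make the \emph{largest} eigenvalue of each relevant block at most $2\sqrt{d-1}$. ($d$-regularity and bipartiteness of $H$ are automatic.)

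Next I would compute the expected new characteristic polynomial. Set $f_s(x)=\det(xI-B_1(s))$ for $k=3$, and $f_s(x)=\det(xI-B_2(s))\det(xI-B_1(s))$ for $k=4$ (the block $B_3(s)$ repeats the roots of $B_1(s)$); in either case $f_s$ is monic with roots exactly the new eigenvalues. Expanding each $\det(xI-B_j(s))$ over permutations and averaging, every monomial indexed by a linear subgraph that contains a cycle of length $\ge 3$ — or uses some edge only ``once'' — vanishes, because $\E[\omega^{\,j\,s(e)}]=0$ for $j\not\equiv 0$, while a $2$-cycle on an edge contributes $\E[\omega^{\,j\,s(e)}\omega^{-j\,s(e)}]=1$; the surviving terms are precisely the matchings of $G$ and they reassemble into the matching polynomial. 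Thus $\E_s[f_s]$ equals the matching polynomial $\mu_G$ of $G$ for $k=3$, and — after restricting to an appropriate sub-family of shift $4$-lifts, possibly up to a power — for $k=4$ as well; this is what Sections~\ref{sec:shift3lift} and~\ref{sec:shift4lift} establish. By the Heilmann--Lieb theorem, $\mu_G$ (and any power of it) is real-rooted with all roots in $[-2\sqrt{d-1},\,2\sqrt{d-1}]$.

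Finally I would show that $\{f_s\}_s$, ranging over the $k^{|E|}$ shift functions (resp.\ the chosen sub-family for $k=4$), is an interlacing family: organize the shift functions into a depth-$|E|$ tree by fixing $s$ one edge at a time, attach to each node the conditional expectation of $f_s$ over all completions, and check that at every node the children's polynomials have a common interlacing — which follows once all those conditional expectations (and their convex combinations) are real-rooted. On a partial assignment one has $B_j(s)=B_j^{\mathrm{fixed}}+\sum_{e\ \mathrm{free}}H_e^{(j)}$ with the $H_e^{(j)}$ independent random Hermitian perturbations of rank $\le 2$, so real-rootedness of $\E[\det(xI-B_j(s))\mid\text{partial assignment}]$ — and, for $k=4$, of the expected product after viewing $B_2\oplus B_1$ as one $2|V|\times 2|V|$ Hermitian matrix with rank-$\le4$ updates — should follow from the Hermitian/higher-rank analogue of the MSS real-stability lemma. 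The interlacing-family theorem then produces a shift lift $s^\star$ with $\lambda_{\max}(f_{s^\star})\le\lambda_{\max}(\E_s f_s)\le 2\sqrt{d-1}$, and by the symmetry from the first step this lift is $d$-regular, bipartite and Ramanujan; applying the argument for $k=3$ and again for $k=4$ gives the theorem.

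The hard part will be the interlacing-family step: proving real-rootedness of every conditional expected polynomial so that the common-interlacing condition holds at each node of the tree. Compared with the $2$-lift case of~\cite{MSS13}, the blocks $B_1(s)$ here are genuinely complex Hermitian, and for $k=4$ one additionally has to handle the \emph{coupled} product $\det(xI-B_2(s))\det(xI-B_1(s))$; selecting a sub-family of shift $4$-lifts whose expected new characteristic polynomial is still (a power of) $\mu_G$ while keeping all conditional expectations real-rooted is the delicate point in that case. A small but necessary check throughout is that the $j=0$ block carries \emph{all} the old eigenvalues with the right multiplicities, so that no trivial eigenvalue $\pm d$ slips into $f_s$.
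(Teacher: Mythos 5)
Your $k=3$ argument is essentially the paper's: the Fourier block decomposition, the observation that the $\omega$ and $\omega^2$ blocks are cospectral, the computation $\E_s[\det(xI-A_s(\omega))]=\mu_G(x)$, and the interlacing-family step carried out with the rank-one machinery of Theorem~\ref{stronger-real-rootedness}. That half is fine.

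The $k=4$ half has a genuine gap, and it sits exactly where you flag the ``delicate point.'' You propose to control the coupled product $f_s(x)=\det(xI-B_2(s))\det(xI-B_1(s))$ as a single interlacing family, treating $B_2(s)\oplus B_1(s)$ as one Hermitian matrix with independent per-edge updates and invoking ``the Hermitian/higher-rank analogue of the MSS real-stability lemma.'' No such analogue is available: Theorem~\ref{stronger-real-rootedness} (Corollary~4.3 of \cite{MSS13-2}) requires the random updates to be rank-one positive semidefinite terms $a_j^r(a_j^r)^*$, whereas a single edge of $G$ perturbs both the $i$-block and the $(-1)$-block, giving a rank-two update of the block matrix that is not of this form; real-rootedness of expected characteristic polynomials under independent higher-rank updates fails in general, and this is precisely the obstruction that prevents a direct extension of \cite{MSS13} to arbitrary $k$-lifts. (It is true, and easy to check by the same vanishing argument you use, that over \emph{all} shift $4$-lifts $\E_s[f_s]=\mu_G(x)^2$, whose largest root is fine; what is missing is the real-rootedness and common interlacing of all the \emph{conditional} expectations along the tree, and you neither prove it nor identify a sub-family for which it holds.)

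The paper closes this gap by decoupling the two blocks rather than coupling them. First apply Theorem~\ref{thm:2-lift} to get a fixed $b\colon E\to\{0,1\}$ with $\lambda_{\max}(A_b(-1))\le 2\sqrt{d-1}$. Then randomize only over $s\colon E\to\{0,2\}$ and set $s'=s+b$: since $s$ is even, $(-1)^{s'(e)}=(-1)^{b(e)}$, so $A_{s'}(-1)=A_b(-1)$ is deterministic and already satisfies the bound, and only the single matrix $A_{s'}(i)$ varies. For that one matrix the per-edge randomness is again expressible through rank-one vectors ($a_j^0=e_u-(-i)^{b(u,v)}e_v$, $a_j^2=e_u+i^{b(u,v)}e_v$), so Theorem~\ref{stronger-real-rootedness} applies verbatim, Lemma~\ref{matching-polynomial-4-lifts} gives $\E_s[\det(xI-A_{s'}(i))]=\mu_G(x)$ for every fixed $b$, and Theorem~\ref{common-interlacing-implies-good-signing} produces the desired $s$. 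If you replace your coupled-product step with this two-stage argument (first fix $b$ via the $2$-lift theorem, then run the interlacing family over the $\{0,2\}$-shifts on top of $b$), your outline becomes a correct proof; as written, the key idea that makes $k=4$ work is missing.
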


\noindent {\bf Remark 1.}
We note that our result for shift $4$-lift is not an immediate consequence of Marcus et al.'s result showing the existence of a $2$-lift that preserves the Ramanujan property. The set of shift $4$-lifts cannot be obtained by considering $2$-lifts of $2$-lifts. The number of possible $4$-lifts of a graph containing a single edge is $4$, while the number of possible $2$-lifts of $2$-lifts of the same graph is $8$. Also, shift $4$-lifts are not a strict subset of $2$-lifts of $2$-lifts (see figures \ref{fig:shift-4-lifts} and \ref{fig:two-2-lifts}). 
\begin{figure}[ht]
\centering
\includegraphics[scale=0.5]{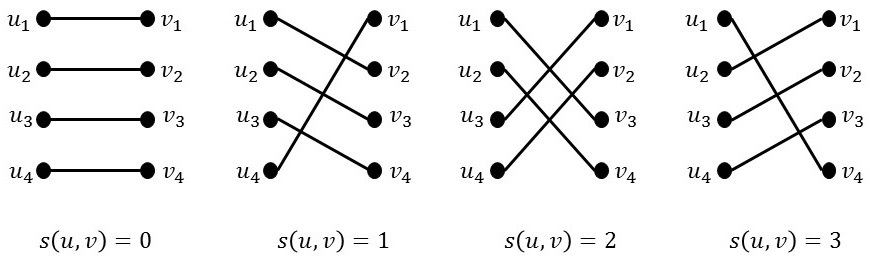}
\caption{The four possibilities that can be obtained by considering shift $4$-lifts of a single edge}\label{fig:shift-4-lifts}
\end{figure}

\begin{figure}[ht]
\centering
\includegraphics[scale=0.5]{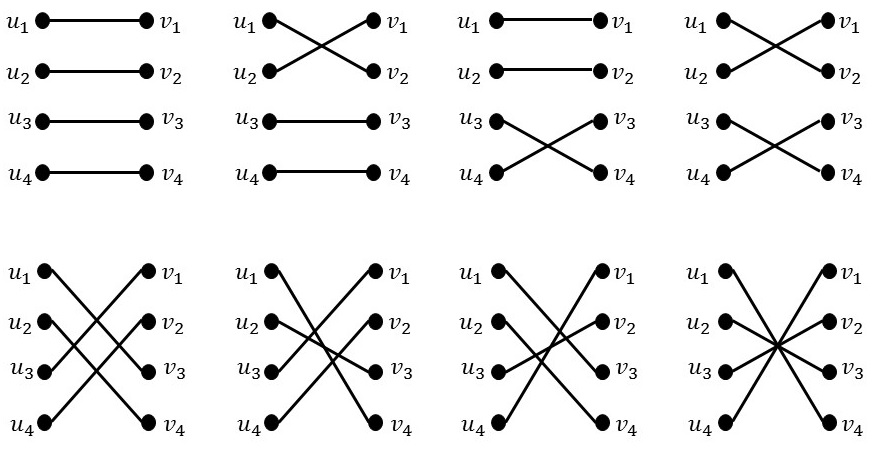}
\caption{The eight possibilities that can be obtained by considering $2$-lifts of $2$-lifts of a single edge}\label{fig:two-2-lifts}
\end{figure}

\noindent {\bf Remark 2.}
For any fixed shift $k$-lift, the spectrum of the lifted graph is the union of the eigenvalues of the adjacency matrix of $G$ and $k-1$ other matrices (see Theorem \ref{eigenvalues-of-shifts}). 
Using the technique of Marcus et al., it is easy to show that for each of these matrices, there is a shift that ensures that the eigenvalues of that matrix are $\leq 2\sqrt{d-1}$. 
The main contribution of this work is showing the existence of a shift that bounds the eigenvalues of several matrices \emph{simultaneously}. \\

It is straightforward to verify that $k$-lifts preserve the $d$-regular and bipartiteness properties. Our main goal is to show the existence of a shift $3$-lift and a shift $4$-lift that preserves the Ramanujan property. Similar to Marcus et al. \cite{MSS13}, we note that since the lifts are also bipartite, the eigenvalues of the corresponding adjacency matrix also occur in pairs $(\lambda,-\lambda)$. Consequently, it suffices to show that there exists a lift whose largest non-trivial eigenvalue is at most $2\sqrt{d-1}$. 

\subsection{Preliminaries}\label{sec:prelims}
Let $G=(V,E)$ be the base graph, with $|V|=n$. We identify the vertices with the elements of $\{1, 2, \dots, n\}$. Let us fix an arbitrary ordering of the vertices in the graph and orient the edges $(u,v)$ such that $u<v$. For notational convenience, let $|E|=m$, and let the edges be $e_1,\ldots, e_m$.
Recall that a shift $k$-lift of $G$ can be described by a shift function $s: E\to\{0,1,\dots,k-1\}$. Note that a shift function $s$ satisfies $s(u,v) \equiv -s(v,u) \pmod k$ for any oriented edge $(u,v)\in E$.

The eigenvalues of the adjacency matrix of a shift $k$-lift of $G$ can be characterized by the eigenvalues of $k$ matrices \cite{AKM},\cite{MS95}.
Let $A$ denote the adjacency matrix of $G$, and let $s$ be the shift function for a shift $k$-lift. Now for a variable $t$, define the following $n \times n$ matrix $A_s(t)$:
\begin{eqnarray*}
A_s(t)[i,j]=
\begin{cases}
0 &\text{ if } A_{ij}=0,\\
t^{s(i,j)} &\text{ if } A_{ij}=1.
\end{cases}
\end{eqnarray*}

\begin{theorem}\cite{AKM},\cite{MS95}\label{eigenvalues-of-shifts}
Let $H$ denote the shift $k$-lift obtained from $G$ using the shift function $s$. Then, the eigenvalues of the adjacency matrix of $H$ are the union of the eigenvalues of the matrices $(A_s(\omega^i))_{i=0,1,\ldots,k-1}$, where $\omega$ corresponds to a primitive $k$-th root of unity.
\end{theorem}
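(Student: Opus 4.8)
The plan is to exhibit an explicit unitary change of basis that conjugates the adjacency matrix $A_H$ of the shift $k$-lift into a direct sum of the $k$ matrices $A_s(\omega^0), A_s(\omega^1),\ldots,A_s(\omega^{k-1})$; the statement about eigenvalues is then immediate from unitary similarity. First I would record the block description of $A_H$. Index the $nk$ vertices of $H$ by pairs $(v,a)$ with $v\in V$ and $a\in\{0,1,\ldots,k-1\}$, and group them by $v$; then $A_H$ becomes an $n\times n$ array of $k\times k$ blocks, where the $(u,v)$ block is $A_{uv}\cdot P^{s(u,v)}$. Here $P$ is the $k\times k$ cyclic-shift permutation matrix $(P)_{ab}=1$ iff $b\equiv a+1\pmod k$, and for an edge oriented $u<v$ we read $s(v,u)$ as $-s(u,v)\bmod k$, so the $(v,u)$ block is $(P^{s(u,v)})^{T}=P^{s(v,u)}$. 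Concretely the $1$'s of the $(u,v)$ block sit in positions $(a,\,a+s(u,v)\bmod k)$, i.e.\ they are exactly the lift edges $u^a v^{(a+s(u,v))\bmod k}$, and symmetry of $A_H$ is automatic. Equivalently $A_H=\sum_{(u,v)\in E}\left(E_{uv}\otimes P^{s(u,v)}+E_{vu}\otimes P^{s(v,u)}\right)$ with $E_{uv}$ the $n\times n$ matrix unit.

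Next I would diagonalize $P$ by the discrete Fourier transform. Fixing a primitive $k$-th root of unity $\omega$, the vectors $f_\ell=(1,\omega^\ell,\omega^{2\ell},\ldots,\omega^{(k-1)\ell})^{T}$, $\ell=0,\ldots,k-1$, satisfy $Pf_\ell=\omega^\ell f_\ell$ and hence $P^{s}f_\ell=\omega^{\ell s}f_\ell$ for every integer $s$. Let $F$ be the unitary matrix whose columns are $k^{-1/2}f_\ell$; then $F^{*}P^{s}F=\mathrm{diag}(\omega^{0\cdot s},\omega^{1\cdot s},\ldots,\omega^{(k-1)s})$ for all $s$. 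Now conjugate $A_H$ by the unitary $U=I_n\otimes F$: each block transforms as $A_{uv}P^{s(u,v)}\mapsto A_{uv}\,F^{*}P^{s(u,v)}F=A_{uv}\,\mathrm{diag}(\omega^{\ell\,s(u,v)})_{\ell=0}^{k-1}$, so every $k\times k$ block of $U^{*}A_HU$ is diagonal. Consequently the $((u,a),(v,b))$ entry of $U^{*}A_HU$ vanishes unless $a=b$, and after the permutation of coordinates that regroups the pairs $(v,a)$ by the Fourier index $a$ rather than by $v$, the matrix $U^{*}A_HU$ becomes block diagonal with $k$ blocks, the $\ell$-th being the $n\times n$ matrix whose $(u,v)$ entry is $A_{uv}\,\omega^{\ell\,s(u,v)}$. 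By the definition of $A_s(t)$ this $\ell$-th block is precisely $A_s(\omega^{\ell})$. (In particular $A_s(\omega^{0})=A$, recovering the eigenvalues of the base graph, and each $A_s(\omega^{\ell})$ is Hermitian since $|\omega|=1$ and $s(v,u)\equiv-s(u,v)$.) Since $A_H$ is unitarily similar to $\bigoplus_{\ell=0}^{k-1}A_s(\omega^{\ell})$, its eigenvalues counted with multiplicity are the union of those of the $A_s(\omega^{\ell})$, which is the assertion.

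I expect no real obstacle beyond bookkeeping: one must stay consistent about the direction of the cyclic shift and about the transpose introduced by the reversed orientation of an edge, and verify that the Fourier identities are used with the correct normalization so that $U$ is genuinely unitary. I would also remark that the statement does not depend on the choice of primitive $k$-th root $\omega$, since replacing $\omega$ by $\omega^{j}$ with $\gcd(j,k)=1$ merely permutes the list $A_s(\omega^{0}),\ldots,A_s(\omega^{k-1})$. Conceptually this is the standard spectral decomposition of a $\Z_k$-voltage (covering) graph through the characters of $\Z_k$; the explicit Fourier computation above is just the self-contained version, and it is the form most convenient for the later sections, which need to analyze the individual matrices $A_s(\omega^{\ell})$ and their characteristic polynomials.
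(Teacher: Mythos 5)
Your proof is correct: the block description $A_H=\sum_{(u,v)\in E}\bigl(E_{uv}\otimes P^{s(u,v)}+E_{vu}\otimes P^{s(v,u)}\bigr)$, the diagonalization of $P$ by the discrete Fourier transform, and the regrouping into $\bigoplus_{\ell=0}^{k-1}A_s(\omega^{\ell})$ all check out, including the sign/transpose bookkeeping via $s(v,u)\equiv-s(u,v)\pmod k$. The paper itself states this theorem as a cited result from \cite{AKM},\cite{MS95} and gives no proof, and your character-based block-diagonalization is precisely the standard argument used there, so there is nothing further to reconcile.
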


We define a \emph{matching} of $G$ to be a set $M = \{(u_1, v_1), (u_2, v_2), \dots, (u_r, v_r)\}$ of edges such that no vertex is adjacent to more than one edge in $M$. A \emph{perfect matching} is a matching in which each vertex is adjacent to exactly one edge.

\begin{definition}
 We define the \emph{matching polynomial} of a graph $G$ on $n$ vertices to be
 \[
  \mu_G(x) := \sum_{k=0}^{\lfloor n/2\rfloor} (-1)^k m_k x^{n-2k},
 \]
where $m_k$ is the number of matchings of $G$ with exactly $k$ edges.
\end{definition}

We need the following result bounding the maximum root of the matching polynomial. 

\begin{theorem}\cite{HL72}\label{matching-polynomial-roots}
If $G$ is a $d$-regular graph, then the maximum root of the matching polynomial, $\mu_G(x)$, is at most $2\sqrt{d-1}$.
\end{theorem}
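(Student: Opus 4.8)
The plan is to prove the Heilmann--Lieb bound by induction on the number of vertices, using the fundamental recurrence relation satisfied by matching polynomials together with a careful analysis of how roots interlace when one deletes a vertex. Recall the identity: if $v$ is a vertex of $G$ and $N(v)$ denotes its neighbours, then
\[
  \mu_G(x) = x\,\mu_{G \setminus v}(x) - \sum_{w \in N(v)} \mu_{G \setminus \{v,w\}}(x),
\]
which follows by classifying matchings according to whether $v$ is unmatched or matched to some neighbour $w$. A second, cleaner recurrence comes from deleting a single edge $e=(u,v)$: $\mu_G(x) = \mu_{G\setminus e}(x) - \mu_{G\setminus\{u,v\}}(x)$, where $G\setminus e$ removes only the edge and $G\setminus\{u,v\}$ removes both endpoints. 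I would use these to set up an induction whose hypothesis is stronger than the bare statement: namely that $\mu_G(x)$ has only real roots, all lying in the interval $[-2\sqrt{d-1},\,2\sqrt{d-1}]$ whenever every vertex of $G$ has degree at most $d$ (the max-degree version is what makes the induction go through, since deleting vertices lowers degrees).

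The key structural fact to establish first is that the matching polynomial is real-rooted and that $\mu_{G\setminus v}$ \emph{interlaces} $\mu_G$ for any vertex $v$. This is itself proved by induction using the vertex recurrence above: one shows simultaneously, over all graphs on $\leq n$ vertices, that (a) $\mu_G$ is real-rooted, and (b) for each vertex $v$, the roots of $\mu_{G\setminus v}$ interlace those of $\mu_G$. The inductive step evaluates the right-hand side $x\,\mu_{G\setminus v}(x) - \sum_{w} \mu_{G\setminus\{v,w\}}(x)$ at consecutive roots of $\mu_{G\setminus v}(x)$: by the interlacing hypothesis applied to the smaller graph $G\setminus v$ (deleting the vertex $w$ from it), each term $\mu_{G\setminus\{v,w\}}$ has a sign that is constant across the relevant root-gap, so the sum has a definite sign, forcing $\mu_G$ to have a root in each gap; counting roots then yields both real-rootedness of $\mu_G$ and interlacing of $\mu_{G\setminus v}$ in $\mu_G$. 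This is the standard Heilmann--Lieb argument and the part that requires the most care with sign bookkeeping and with the behaviour at $\pm\infty$.

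With real-rootedness and interlacing in hand, I would bound the largest root. Fix a $d$-regular $G$ and let $\theta = \theta(G)$ be its largest matching root. Using the vertex recurrence at $x = \theta$, and the fact that all the $\mu_{G\setminus v}$, $\mu_{G\setminus\{v,w\}}$ are positive for $x$ larger than all their roots (which are $\leq \theta$ by interlacing on the smaller graphs), one derives an inequality relating $\theta$ to the largest roots of the vertex-deleted subgraphs, which have max degree $\leq d$. The cleanest route is the ``path tree'' or covering argument: one shows $\mu_G(x)$ divides $\mu_{T}(x)$, where $T = T(G,v)$ is the tree of non-backtracking walks in $G$ started at $v$ (the path tree), so every root of $\mu_G$ is a root of $\mu_T$; since $T$ has max degree $\leq d$, it suffices to bound matching roots of trees of max degree $d$. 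For a tree, the matching polynomial equals the characteristic polynomial of the adjacency matrix, so its largest root is the spectral radius, and for an infinite $d$-regular tree this is exactly $2\sqrt{d-1}$; a finite subtree of max degree $d$ has spectral radius strictly below that of the infinite $d$-regular tree, i.e. $< 2\sqrt{d-1}$. I would make the last step precise either by a direct eigenvector/Rayleigh-quotient estimate on trees of max degree $d$ (test vector decaying like $(d-1)^{-\mathrm{dist}(v,\cdot)/2}$ from a root) or by exhibiting an explicit positive ``superharmonic'' function witnessing that $(2\sqrt{d-1})\,I - A_T$ is positive semidefinite.

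The main obstacle I anticipate is proving the divisibility $\mu_G(x)\mid\mu_{T}(x)$ for the path tree $T$ (equivalently, setting up the interlacing argument robustly enough to reduce the general $d$-regular case to trees). Establishing this cleanly requires the right recurrence relating $\mu_G$, $\mu_{G\setminus v}$, and the path-tree polynomials and an induction on tree depth; the sign analysis in the interlacing step, and correctly handling graphs that are disconnected or have isolated vertices after deletion, are the fiddly points. Once the reduction to trees is secured, the spectral bound $2\sqrt{d-1}$ for $d$-regular trees is the classical computation and is comparatively routine.
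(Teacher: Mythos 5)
The paper does not prove this statement at all: it is the classical Heilmann--Lieb bound, imported verbatim with a citation to \cite{HL72}, so there is no in-paper argument to compare against. Your sketch is the standard (and correct) route, essentially Godsil's proof as also recounted by Marcus et al.: the vertex recurrence $\mu_G(x)=x\,\mu_{G\setminus v}(x)-\sum_{w\in N(v)}\mu_{G\setminus\{v,w\}}(x)$, the divisibility $\mu_G(x)\mid \mu_{T(G,v)}(x)$ for the path tree, the identification of a tree's matching polynomial with its characteristic polynomial, and the embedding of a max-degree-$d$ tree into the infinite $d$-regular tree whose adjacency operator has norm $2\sqrt{d-1}$. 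Two small remarks: the interlacing induction and the path-tree reduction are largely redundant with each other --- once you have $\mu_G\mid\mu_T$ with $T$ a tree, real-rootedness of $\mu_G$ comes for free from the symmetry of $A_T$, so you can drop the sign-bookkeeping induction entirely; and it is worth stating explicitly that the path tree of a $d$-regular graph has maximum degree $d$ (the root has $d$ children, every other node has at most $d-1$ children plus its parent), which is exactly what licenses the comparison with the infinite $d$-regular tree.
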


The family of $2$-lifts is identical to the family of shift $2$-lifts. We need the following result showing the existence of $2$-lifts whose new eigenvalues satisfy the Ramanujan bound.
\begin{theorem} \cite{MSS13} \label{thm:2-lift}
If $G$ is a $d$-regular bipartite graph, then there exists a $2$-lift of $G$ such that all new eigenvalues of the adjacency matrix of the lift are at most $2\sqrt{d-1}$.
\end{theorem}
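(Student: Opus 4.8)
The plan is to follow the method of interlacing polynomials of Marcus, Spielman, and Srivastava. Fix $k=2$ and identify the sought shift $2$-lift with a signing $s\in\{0,1\}^E$; by Theorem~\ref{eigenvalues-of-shifts} (taking $\omega=-1$) the new eigenvalues of the lift are exactly the eigenvalues of the signed adjacency matrix $A_s(-1)$, whose $(i,j)$ entry is $(-1)^{s(i,j)}$ when $ij\in E$ and $0$ otherwise. Using bipartiteness of $G$, split $V=L\cup R$, so that $A_s(-1)$ has a single nonzero off-diagonal block $B_s$, the signed $L\times R$ biadjacency matrix. Then the nonzero eigenvalues of $A_s(-1)$ are precisely $\pm\sqrt{\lambda}$ over the nonzero eigenvalues $\lambda$ of $B_sB_s^{\top}$, so $\lambda_{\max}\!\big(A_s(-1)\big)=\sqrt{\lambda_{\max}(B_sB_s^{\top})}$ and it suffices to find a signing $s$ with $\lambda_{\max}(B_sB_s^{\top})\le 4(d-1)$; this controls all new eigenvalues, which is why the bipartite hypothesis lets us get away with only an upper bound.

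Next I would write $B_s=\sum_{e=(u,v)\in E} s'(e)\,\mathbf e_u\mathbf e_v^{\top}$ with $s'(e)=(-1)^{s(e)}\in\{\pm1\}$, and group the resulting product by the (unique) endpoint of each edge lying in $R$:
\[
B_sB_s^{\top}=\sum_{v\in R} y_v y_v^{\top},\qquad y_v:=\sum_{e=(u,v)\in E} s'(e)\,\mathbf e_u .
\]
Because each edge of the bipartite graph $G$ meets $R$ in exactly one vertex, the random rank-one positive semidefinite matrices $y_vy_v^{\top}$, $v\in R$, are \emph{mutually independent}, and specifying all of the $y_v$ is the same as specifying $s$. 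Taking expectations over uniform $\pm1$ signs, a short determinant expansion (the Godsil--Gutman identity) shows that $\E_s[\det(yI-B_sB_s^{\top})]$ is a polynomial whose roots are exactly the squares of the roots of the matching polynomial $\mu_G$; hence, by Theorem~\ref{matching-polynomial-roots}, it is real-rooted with largest root at most $4(d-1)$.

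I would then upgrade this ``on average'' bound to an existential one. Order $R=\{v_1,\dots,v_r\}$ and build a tree whose node at depth $j$ is indexed by a choice of values $(y_{v_1},\dots,y_{v_j})$, with associated polynomial
\[
p_{y_{v_1},\dots,y_{v_j}}(y):=\E\Big[\det\Big(yI-\textstyle\sum_{i\le j} y_{v_i}y_{v_i}^{\top}-\sum_{i>j} y_{v_i}y_{v_i}^{\top}\Big)\Big],
\]
the expectation taken over the (independent) unspecified $y_{v_i}$; the root polynomial is $\E_s[\det(yI-B_sB_s^{\top})]$ and the leaves are the individual polynomials $\det(yI-B_sB_s^{\top})$. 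The interlacing-family lemma then produces a leaf whose largest root is at most that of the root polynomial, hence at most $4(d-1)$ — provided that (i) every $p_{y_{v_1},\dots,y_{v_j}}$ is real-rooted and (ii) at each internal node the children $\{p_{y_{v_1},\dots,y_{v_j},w}\}_{w}$ share a common interlacing. That leaf is the desired signing $s$.

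Verifying (i) and (ii) is the technical heart and the main obstacle, and it is exactly here that the bipartite reduction to a sum of \emph{independent} rank-one positive semidefinite matrices pays off. Via the equivalence ``finitely many real-rooted polynomials have a common interlacing iff all their convex combinations are real-rooted,'' both (i) and (ii) reduce to the rank-one update lemma of Marcus et al.: for a fixed real symmetric $A$ and a random vector $w$ with finite support, $\E_w[\det(yI-A-ww^{\top})]$ is real-rooted and interlaces $\det(yI-A)$. The proof I have in mind writes, for $y$ outside the spectrum of $A$, $\det(yI-A-ww^{\top})=\det(yI-A)\,(1-w^{\top}(yI-A)^{-1}w)$, observes that $\E_w[w^{\top}(yI-A)^{-1}w]=\sum_i \langle P_i,\E_w[ww^{\top}]\rangle/(y-\lambda_i)$ is a sum of simple poles at the eigenvalues $\lambda_i$ of $A$ with nonnegative residues (here $P_i$ is the $\lambda_i$-eigenprojection), so $1-\E_w[w^{\top}(yI-A)^{-1}w]$ has one real root in each gap between consecutive $\lambda_i$; clearing denominators yields real-rootedness and interlacing. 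Iterating this over $v_{j+1},\dots,v_r$ gives (i), and one further application supplies the common interlacing in (ii). Everything else — the block reduction, the Godsil--Gutman expansion, and the tree bookkeeping — is routine.
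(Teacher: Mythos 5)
Your architecture is sound and is essentially the same interlacing-families route that \cite{MSS13} takes (and that this paper mirrors for shift $3$- and $4$-lifts): reduce via Theorem~\ref{eigenvalues-of-shifts} to the signed matrix $A_s(-1)$, show that the expected characteristic polynomial is governed by the matching polynomial (Godsil--Gutman), bound its roots by Theorem~\ref{matching-polynomial-roots}, and extract one good signing from an interlacing family via Theorem~\ref{common-interlacing-implies-good-signing} and the convex-combination criterion of Lemma~\ref{conv-comb-implies-interlacing}. The bipartite block reduction to $B_sB_s^{\top}=\sum_{v\in R}y_vy_v^{\top}$ with independent $y_v$ is correct, as is your one-step lemma that $\E_w[\det(yI-A-ww^{\top})]$ is real-rooted and interlaced for a \emph{fixed} symmetric $A$.

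The genuine gap is the sentence ``Iterating this over $v_{j+1},\dots,v_r$ gives (i), and one further application supplies the common interlacing in (ii).'' Your one-step lemma needs the unperturbed matrix to be deterministic. After averaging over the last vector, the partial expectation equals $\det(yI-A')\bigl(1-\mathrm{tr}\bigl((yI-A')^{-1}M\bigr)\bigr)$, which is no longer the characteristic polynomial of any fixed matrix, so the lemma cannot be applied again to average over the next vector; the proposed induction simply does not go through. What (i) and (ii) actually require is real-rootedness of $\E\bigl[\det\bigl(yI-F-\sum_{i>j}w_iw_i^{\top}\bigr)\bigr]$ for \emph{several} independent random vectors, and moreover with an arbitrary (non-uniform) distribution on the $(j{+}1)$-st vector so that Lemma~\ref{conv-comb-implies-interlacing} can be fed all convex combinations of the children. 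This is exactly the mixed-characteristic-polynomial theorem of Marcus--Spielman--Srivastava, whose known proofs are genuinely multivariate (real stability of $\det(xI+\sum_i z_iM_i)$ and its preservation under the operators $1-\partial_{z_i}$), not an iteration of the univariate rank-one update. In the present paper that role is played by Theorem~\ref{stronger-real-rootedness} (Corollary 4.3 of \cite{MSS13-2}), invoked in Lemmas~\ref{real-rootedness-for-3} and \ref{real-rootedness-for-4} precisely where you invoke ``iteration.'' Your argument becomes complete if you replace the iteration step by an appeal to that theorem (after the harmless substitution $x\mapsto -x$ to pass from $\det[xI+\sum a_ja_j^*]$ to $\det[yI-\sum y_vy_v^{\top}]$), or by citing the mixed characteristic polynomial result of \cite{MSS13-2} directly; as written, the technical heart of the proof is missing.
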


We also need the following result showing the real-rootedness of the expected characteristic polynomial of a matrix obtained as the sum of rank one matrices, where the rank one matrices are chosen according to a product distribution (we use $a^*$ to denote the conjugate transpose).
\begin{theorem}[Corollary 4.3 in \cite{MSS13-2}]\label{stronger-real-rootedness}
Let $\H$ be a finite set, $a_j^r\in \C^n$, $p_j^r\in [0,1]$ for $j\in [m], r\in \H$ such that $\sum_{r\in \H}p_j^r=1\ \forall j\in [m]$. Then every polynomial of the form
\[
P(x):= \sum_{t_1,\ldots, t_m\in \H}\left(\prod_{r\in \H}\prod_{j\in[m]:t_j=r}p_j^r\right)\det\left[xI+\sum_{r\in\H}\sum_{j\in[m]:t_j=r}a_j^r({a_j^r})^*\right]
\]
is real rooted.
\end{theorem}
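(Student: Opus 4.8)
The plan is to follow Marcus, Spielman, and Srivastava \cite{MSS13-2}: recognize $P(x)$ as the expected characteristic polynomial of a sum of independent rank-one positive semidefinite random matrices, rewrite that expectation by applying differential operators to the determinant of a real stable linear matrix pencil, and then conclude real-rootedness from the theory of stable polynomials.

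First I would set up the probabilistic picture. Let $t_1,\dots,t_m$ be independent random variables, with $t_j$ taking value $r\in\H$ with probability $p_j^r$, and put $A_j := a_j^{t_j}(a_j^{t_j})^*$, a random Hermitian positive semidefinite matrix of rank at most one whose expectation is the positive semidefinite matrix $\bar A_j := \sum_{r\in\H} p_j^r\, a_j^r(a_j^r)^*$. Reindexing the products, the coefficient of a fixed assignment $(t_1,\dots,t_m)$ in $P$ equals $\prod_{r\in\H}\prod_{j:t_j=r} p_j^r = \prod_{j=1}^m p_j^{t_j}$, and the matrix in the corresponding determinant is $\sum_{j=1}^m a_j^{t_j}(a_j^{t_j})^*$, so
\[
 P(x) = \E\Big[\det\Big(xI + \sum_{j=1}^m A_j\Big)\Big].
\]

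The heart of the argument is the identity
\[
 P(x) = \Big(\prod_{j=1}^m (1+\partial_{y_j})\Big)\,\det\Big(xI + \sum_{j=1}^m y_j\bar A_j\Big)\Big|_{y_1=\cdots=y_m=0},
\]
in fresh variables $y_1,\dots,y_m$, which I would prove by induction on $m$. The key point, and the only place the rank-one hypothesis enters, is that for any fixed matrix $M$ and any Hermitian $A$ with $\mathrm{rank}(A)\le 1$ the polynomial $\det(M+yA)$ is \emph{affine} in $y$: writing $\det(M+yA)=\det(M)+y\cdot c_M(A)$ one has $c_M(A)=\mathrm{tr}(\mathrm{adj}(M)\,A)$, which is a \emph{linear} functional of $A$. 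Hence for a random rank-$\le 1$ matrix $A$, $\E_A[\det(M+A)]=\det(M)+c_M(\E A)$, and since the operator $1+\partial_y$ followed by setting $y=0$ depends only on the terms of degree at most one in $y$, this equals $(1+\partial_y)\det(M+y\bar A)$ evaluated at $y=0$. Peeling $A_m$ off with $M=xI+\sum_{j<m}A_j$, using linearity of expectation to commute $(1+\partial_{y_m})$ (and the substitution $y_m=0$) past $\E_{A_1,\dots,A_{m-1}}$, and then applying the inductive hypothesis with base point $xI+y_m\bar A_m$ in place of $xI$, gives the claim; the base case $m=0$ is $\det(xI)=x^n$. (Equivalently, after the substitution $x\mapsto -x$ this is the mixed characteristic polynomial $\mu[\bar A_1,\dots,\bar A_m]$ of \cite{MSS13-2}, with the operator written as $\prod_j(1-\partial_{y_j})$.)

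Finally I would read off real-rootedness. Since $I,\bar A_1,\dots,\bar A_m$ are Hermitian positive semidefinite, the determinant of the linear pencil $\det(xI+\sum_j y_j\bar A_j)$ is a real stable polynomial in $(x,y_1,\dots,y_m)$ — and not identically zero, since it equals $1$ at $x=1$, $y_1=\cdots=y_m=0$. Each operator $1+\partial_{y_j}$ is a classical real-stability preserver (exactly the operator used to form the mixed characteristic polynomial in \cite{MSS13-2}), and specializing a real stable polynomial at real values of some of its variables again yields a real stable polynomial or the zero polynomial. Applying all $m$ operators and then setting $y_1=\cdots=y_m=0$ therefore leaves a real stable polynomial in the single variable $x$; since $P(x)$ is monic of degree $n$ (being an average of the monic degree-$n$ determinants $\det(xI+\,\cdot\,)$) it is nonzero, and a nonzero univariate real stable polynomial is real-rooted. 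I expect the main obstacle to be the inductive identity in the third paragraph: one must invoke the rank-one hypothesis in precisely the right place — so that $\det(M+yA)$ is affine in $y$ for \emph{every} realization of $A$, which is what collapses the expectation to a dependence on $\bar A$ alone — and then track the bookkeeping so that each differential operator and each substitution commutes past the remaining expectations; the two structural facts about stable polynomials used at the end are standard and can be quoted from \cite{MSS13-2} or the Borcea--Br\"and\'en literature.
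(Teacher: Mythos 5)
This theorem is not proved in the paper at all---it is imported as Corollary 4.3 of \cite{MSS13-2}---and your proposal is a correct reconstruction of that source's own argument: identify $P(x)$ with the expected characteristic polynomial of a sum of independent rank-one PSD matrices, express it as $\prod_j(1+\partial_{y_j})\det\bigl(xI+\sum_j y_j\bar A_j\bigr)\big|_{y=0}$ (the mixed characteristic polynomial up to $x\mapsto -x$), and conclude via real stability of the determinantal pencil, stability preservation by $1\pm\partial_{y_j}$, and specialization at real points, with the rank-one hypothesis entering exactly where you say (affineness of $\det(M+yA)$ in $y$). The only cosmetic point is that the induction should be stated with a general base matrix $B$ (with entries polynomial in the auxiliary variables) in place of $xI$, which your ``base point $xI+y_m\bar A_m$'' step implicitly assumes; otherwise the argument matches the cited proof.
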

We need the notion of \emph{common interlacing}. We say that a polynomial $g(x)=\prod_{i=1}^{n-1}(x-\alpha_i)$ interlaces a polynomial $f(x)=\prod_{i=1}^{n}(x-\beta_i)$ if $\beta_i\le \alpha_i\le\beta_{i+1}$ for every $i=1,\ldots, n-1$.We say that a family of univariate polynomials $f_1(x), f_2(x),\ldots, f_r(x)$ have a common interlacing if there exists a polynomial $g(x)$ such that $g(x)$ interlaces $f_i(x)$ for every $i\in [r]$.

We define $S_k$ to be the \emph{symmetric group} on $k$, i.e., the group of permutations of $\{1,2,\dots, k\}$. Furthermore, for a finite set $T$, we will define $\sym(T)$ to be the set of permutations of $T$. Also, for a permutation $\pi$, we let $\sgn(\pi)$ denote the \emph{sign} of $\pi$ (+1 if $\pi$ is even, and -1 if $\pi$ is odd).

\section{Shift 3-Lifts and the Matching Polynomial} \label{sec:shift3lift}
By Theorem \ref{eigenvalues-of-shifts}, it is sufficient to show that there exists a shift function $s:E \to \{0,1,2\}$ such that the eigenvalues of $A_s(\omega)$ and $A_s(\omega^2)$ are at most $2\sqrt{d-1}$, where $\omega$ is a cube root of unity. We first observe that $A_s(\omega)=A_s(\omega^2)^T$. Since the determinant of a matrix is preserved under the transpose operation, the characteristic polynomials of both matrices are identical. Thus, it suffices to show that there exists a shift $s$ such that the largest eigenvalue of $A_s(\omega)$ is at most $2\sqrt{d-1}$. The rest of the proof technique is a natural extension of the one by Marcus et al. \cite{MSS13}. We work out the details for the sake of completeness. 

We first show that the expected characteristic polynomial of $A_s(\omega)$, where the shift values $s(u,v)$ for all edges $(u,v)\in E$ are chosen uniformly at random from $\{0,1,2\}$, is the matching polynomial for $G$.

\begin{lemma}\label{matching-polynomial-3-lifts}
Suppose for every edge $(u,v)$, let $s(u,v)$ be chosen uniformly at random from $\{0,1,2\}$. Then, $\E_s(\det[xI-A_s(\omega)])=\mu_G(x)$.
\end{lemma}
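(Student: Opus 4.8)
The plan is to expand $\det[xI - A_s(\omega)]$ by the Leibniz formula and take the expectation term by term. Write $\det[xI - A_s(\omega)] = \sum_{\pi \in S_n} \mathrm{sgn}(\pi) \prod_{i=1}^{n}(xI - A_s(\omega))[i,\pi(i)]$. Since $A_s(\omega)$ has the same zero/nonzero pattern as $A$ and $G$ is simple (so its diagonal vanishes), the term for $\pi$ is nonzero only if $\{i,\pi(i)\}\in E$ for every non-fixed point $i$, and each fixed point then contributes a factor $x$. Decomposing $\pi$ into cycles and combining $\mathrm{sgn}(\pi)=\prod_{c}(-1)^{|c|-1}$ with the sign $(-1)^{|c|}$ coming from the off-diagonal entries $-A_s(\omega)[i,\pi(i)]$, the term for $\pi$ becomes $x^{f(\pi)}\prod_{c:\,|c|\ge 2}\bigl(-\prod_{i\in c}A_s(\omega)[i,\pi(i)]\bigr)$, where $f(\pi)$ is the number of fixed points of $\pi$.

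Next I would take expectations, using that the shift values $s(e)$ on distinct edges $e$ are independent and $\mathbb{E}[\omega^{s(e)}]=\tfrac13(1+\omega+\omega^2)=0$ (and likewise $\mathbb{E}[\omega^{-s(e)}]=\mathbb{E}[\omega^{2s(e)}]=0$). A $2$-cycle $(u\,v)$ contributes $-A_s(\omega)[u,v]\,A_s(\omega)[v,u] = -\omega^{s(u,v)}\omega^{-s(u,v)} = -1$ deterministically, since it traverses the single edge $\{u,v\}$ in both directions. A cycle $i_1\to i_2\to\cdots\to i_\ell\to i_1$ of length $\ell\ge 3$ visits distinct vertices, so its edges $\{i_1,i_2\},\dots,\{i_\ell,i_1\}$ are pairwise distinct; hence its contribution $-\omega^{\,s(i_1,i_2)+\cdots+s(i_\ell,i_1)}$ has expectation $-\prod_{j}\mathbb{E}[\omega^{s(i_j,i_{j+1})}]=0$. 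Consequently, in $\mathbb{E}_s[\det(xI-A_s(\omega))]$ only those permutations whose nontrivial cycles are all transpositions $\{u,v\}\in E$ survive, and these correspond precisely to matchings $M$ of $G$.

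Finally I would collect the surviving terms: a matching $M$ with $|M|=k$ edges yields a permutation with $n-2k$ fixed points and $k$ transpositions, contributing $x^{n-2k}\cdot(-1)^k$ in expectation. Summing over all matchings and grouping by size $k$ gives $\mathbb{E}_s[\det(xI-A_s(\omega))]=\sum_{k=0}^{\lfloor n/2\rfloor}(-1)^k m_k x^{n-2k}=\mu_G(x)$, as claimed.

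I do not expect a serious obstacle; this is essentially the $k=3$ analogue of the computation in \cite{MSS13}, which we carry out for completeness. The one point needing care is verifying that every cycle of length $\ge 3$ contributes zero in expectation, which relies on the fact that such a cycle is built from pairwise distinct edges, so its expectation factors into mean-zero terms — and this is exactly where it matters that we average $\omega^{s(e)}$ over the cube roots of unity, so that $\mathbb{E}[\omega^{s(e)}]=0$, rather than over $\{\pm 1\}$ as in the $2$-lift case.
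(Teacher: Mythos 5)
Your proof is correct and follows essentially the same route as the paper: Leibniz expansion, independence of the shifts across distinct edges, and $\E[\omega^{s(e)}]=0$ to kill every permutation that is not a matching, with the $2$-cycles contributing $-1$ each. The only cosmetic difference is bookkeeping—you organize the vanishing terms by cycle decomposition (showing each cycle of length $\ge 3$ uses pairwise distinct edges), while the paper dismisses any $\pi$ with $\pi(u)=v$ but $\pi(v)\neq u$ by factoring out the single mean-zero entry; both arguments are sound and equivalent.
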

\begin{proof}
For notational convenience, let $B_s(x) = xI - A_s(\omega)$, and let $B_s(x)_{u,v}$ denote the $(u,v)$ entry of $B_s(x)$. Then, note that
\begin{align}
 \E_s(\det[xI-A_s(\omega)]) &= \E_s\left(\sum_{\pi\in S_n} {\sgn(\pi)} \cdot\prod_{j=1}^n B_s(x)_{j, \pi(j)}\right) \nonumber\\
 &= \sum_{\pi\in S_n} {\sgn(\pi)} \cdot \E_s\left(\prod_{j=1}^n B_s(x)_{j,\pi(j)}\right). \label{eq:expdet}
\end{align}
Let $(u,v)$ be an edge in $G$. Suppose $\pi$ is a permutation such that $\pi(u) = v$ but $\pi(v) \neq u$, then
\begin{eqnarray*}
 \E_s\left(\prod_{j=1}^n B_s(x)_{j,\pi(j)}\right) &=& \E_s(B_s(x)_{u,v}) \cdot \E_s\left(\prod_{j\neq u} B_s(x)_{j,\pi(j)}\right)\\
 &=& \E_{s(u,v)} (\omega^{s(u,v)}) \cdot \E_s\left(\prod_{j\neq u} B_s(x)_{j,\pi(j)}\right)\\
 &=& \frac{\omega^0 + \omega^1 + \omega^2}{3} \cdot \E_s\left(\prod_{j\neq u} B_s(x)_{j,\pi(j)}\right)\\
 &=& 0.
\end{eqnarray*}
Similarly, if $\pi$ is a permutation satisfying $\pi(v) = u$ but $\pi(u)\neq v$, then 
\[
 \E_s\left(\prod_{j=1}^n B_s(x)_{j,\pi(j)}\right) = 0.
\]

Thus, the only permutations for which the expectation on the right hands side of (\ref{eq:expdet}) is non-zero are those $\pi$ that correspond to \emph{matchings}: There exists a matching $M$ such that for every edge $(u,v)\in M$, we have $\pi(u)=v$ and $\pi(v)=u$ and for every vertex $w\in V$ which is not adjacent to any of the edges in $M$, we have $\pi(w)=w$. 
Moreover, if $\pi$ corresponds to a matching $M = \{(u_1, v_1), (u_2, v_2), \dots, (u_t, v_t)\}$, then
\begin{align*}
 \mathbb{E}_s\left(\prod_{j=1}^n B_s(x)_{j,\pi(j)}\right) &= x^{n-2t}\cdot \prod_{j=1}^t \E_s\left(B_s(x)_{u_j, v_j} \cdot B_s(x)_{v_j, u_j}\right) \\
 &= x^{n-2t} \cdot \prod_{j=1}^t \E_s \left(\omega^{s(u_j,v_j)}\cdot \omega^{-s(u_j, v_j)}\right)\\
 &= x^{n-2|M|}.
\end{align*}
Thus, from (\ref{eq:expdet}), we conclude that
\begin{align*}
 \E_s(\det[xI-A_s(\omega)]) &= \sum_{\text{$\pi\in S_n$ corresponds to a matching $M$}} {\sgn(\pi)}\cdot x^{n-2|M|}\\
 &= \sum_{\text{$\pi\in S_n$ corresponds to a matching $M$}} (-1)^{|M|} x^{n-2|M|}\\
 &= \sum_{k=0}^{\lfloor n/2\rfloor} (-1)^k m_k x^{n-2k}\\
 &= \mu_G(x),
\end{align*}
where $m_k$ is the number matchings in $G$ with exactly $k$-edges.
\end{proof}

\section{Shift 4-Lifts and the Matching Polynomial} \label{sec:shift4lift}
For the case of shift $4$-lifts, it is sufficient to show that there exists a shift function $s: E \to \{0,1,2,3\}$ such that the eigenvalues of $A_s(i), A_s(-1), A_s(-i)$ are at most $2\sqrt{d-1}$, where $i$ is the complex square root of $-1$. Once again, we note that $A_s(i)=A_s(-i)^T$ for any fixed shift function $s$.  Therefore, it suffices to show the existence of a shift function $s: E\to \{0,1,2,3\}$ for which the eigenvalues of $A_s(-1)$ and $A_s(i)$ are at most $2\sqrt{d-1}$. 

We show the existence of a shift that satisfies the eigenvalue bound for both matrices simultaneously by a two-step procedure: Using the result of Marcus et al., we have a shift function $b: E\to\{0,1\}$ corresponding to a shift 2-lift such that the eigenvalues of $A_b(-1)$ are at most $2\sqrt{d-1}$. We then show that there exists an $s:E\to\{0,2\}$ such that for $s' = s+b$, the eigenvalues of $A_{s'}(i)$ are at most $2\sqrt{d-1}$. It is straightforward to verify that $A_{s'}(-1)=A_b(-1)$ since for any edge $(u,v)\in E$, we have $(-1)^{s'(u,v)} = (-1)^{s(u,v)+b(u,v)} = (-1)^{b(u,v)}$, using the fact that $s(u,v) \in \{0,2\}$. We thus have a shift 4-lift given by the shift function $s':E\to\{0,1,2,3\}$ such that the eigenvalues of $A_{s'}(-1)$ and $A_{s'}(i)$ are at most $2\sqrt{d-1}$, thereby giving us the desired conclusion.

We now proceed to show that for any fixed $b:E\to\{0,1\}$, the expected characteristic polynomial of $A_{s'+b}(i)$, where $s$ is chosen uniformly at random over the function space $\{E\to\{0,2\}\}$, is the matching polynomial.
\begin{lemma}\label{matching-polynomial-4-lifts}
Given $b:E\to\{0,1\}$, let $s:E\to\{0,2\}$ be chosen uniformly at random from $\{E\to\{0,2\}\}$, and set $s'=s+b$. Then,
\[
\E_{s}(\det[xI-A_{s'}(i)])=\mu_G(x).
\]
\end{lemma}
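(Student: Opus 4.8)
The plan is to follow the proof of Lemma~\ref{matching-polynomial-3-lifts} almost line for line, the only new feature being that the fixed function $b$ enters in a harmless way. Set $B_{s'}(x):=xI-A_{s'}(i)$, expand the determinant over permutations, and move the expectation inside:
\[
\E_s(\det[xI-A_{s'}(i)])=\sum_{\pi\in S_n}\sgn(\pi)\cdot\E_s\Bigl(\prod_{j=1}^n B_{s'}(x)_{j,\pi(j)}\Bigr).
\]
Here $B_{s'}(x)_{j,j}=x$ (since $G$ is loopless) and, for $j\ne \ell$, $B_{s'}(x)_{j,\ell}=-i^{s'(j,\ell)}$ if $(j,\ell)\in E$ and $0$ otherwise. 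Decomposing $\pi$ into cycles, a transposition $(u\,v)$ forces $\{u,v\}\in E$ (else the product vanishes identically), so the task is to show, exactly as before, that any cycle of length $\ge 3$ contributes $0$ in expectation; then only $\pi$ built from fixed points and edge-transpositions, i.e.\ matchings $M$ of $G$, survive.

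The one place requiring a moment's thought --- the analogue of where the $k=3$ argument used $\tfrac{\omega^0+\omega^1+\omega^2}{3}=0$ --- is the vanishing of long cycles in the presence of $b$. In a cycle $(u_1\,u_2\,\cdots\,u_\ell)$ with $\ell\ge 3$, the shift value $s(u_1,u_2)$ enters the full product only through the single factor $B_{s'}(x)_{u_1,u_2}=-i^{s'(u_1,u_2)}=-\,i^{b(u_1,u_2)}\,i^{s(u_1,u_2)}$ (the reverse orientation of this edge cannot occur in a cycle of length $\ge 3$, and distinct edges of $G$ carry independent shift values). Since $s(u_1,u_2)$ is uniform on $\{0,2\}$ and independent of everything else, its conditional contribution is $i^{b(u_1,u_2)}\cdot\tfrac{i^0+i^2}{2}=0$: the fixed $b$ only multiplies the term by the unit $i^{b(u_1,u_2)}$ and cannot spoil the cancellation, because $i$ raised to a uniform element of $\{0,2\}$ still has mean $0$. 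Hence every cycle of length $\ge 3$ drops out in expectation. This is the main (and only mildly delicate) obstacle; the rest is bookkeeping.

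For the remaining $\pi$, whose transpositions form a matching $M$, we have $\sgn(\pi)=(-1)^{|M|}$ and
\[
\prod_{j=1}^n B_{s'}(x)_{j,\pi(j)}=x^{\,n-2|M|}\prod_{\{u,v\}\in M}\bigl(-i^{s'(u,v)}\bigr)\bigl(-i^{s'(v,u)}\bigr)=x^{\,n-2|M|}\prod_{\{u,v\}\in M}i^{\,s'(u,v)+s'(v,u)}.
\]
Because $s'$ is a shift function, $s'(v,u)\equiv -s'(u,v)\pmod 4$, so each factor $i^{\,s'(u,v)+s'(v,u)}$ equals $i^0=1$, independently of both $s$ and $b$; thus the product is the deterministic quantity $x^{\,n-2|M|}$ and the expectation is the same. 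Summing over all matchings $M$ of $G$ then gives $\sum_{k=0}^{\lfloor n/2\rfloor}(-1)^k m_k x^{n-2k}=\mu_G(x)$. In short, the argument only uses two properties of the value set $\{0,2\}$ of $s$: that $i^{\{0,2\}}=\{+1,-1\}$ has mean zero (killing long cycles) and that $s'(u,v)+s'(v,u)\equiv 0\pmod 4$ for the combined shift (making matched edges contribute $1$); both are untouched by adding the arbitrary $2$-lift shift $b$, so no further estimates are needed.
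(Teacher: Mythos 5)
Your proposal is correct and follows essentially the same route as the paper: expand the determinant via the Leibniz formula, use that $i^{s(u,v)}$ with $s(u,v)$ uniform on $\{0,2\}$ has mean zero (the fixed $i^{b(u,v)}$ being only a unit factor) together with independence across edges to kill every permutation that is not a matching, and use the antisymmetry $s'(v,u)\equiv -s'(u,v)\pmod 4$ to make each matched edge contribute $1$, yielding $\mu_G(x)$. The only difference is bookkeeping: you expand over all of $S_n$ with a cycle decomposition (as in the paper's shift 3-lift lemma), whereas the paper's own proof of this lemma groups terms by the set $T$ of non-fixed vertices; the content is identical.
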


\begin{proof}
We use the Leibniz expansion of the characteristic polynomial:
\[
\E_{s}(\det[xI-A_{s'}(i)])
\]
\begin{align}
&= \E_{s}\left(\sum_{\sigma \in S_n}{\sgn(\sigma)} \cdot\prod_{u=1}^n{(xI-A_{s'}(i))_{u,\sigma(u)}}\right) \nonumber\\
&= \sum_{k=0}^n x^{n-k} \sum_{T\subseteq [n], |T|=k} \sum_{\substack{\sigma\in \sym(T)\\ \sigma(u)\neq u\ \forall u\in T}} {\sgn(\sigma)}\cdot \E_{s}\left(\prod_{u\in T} A_{s'}(i)_{u,\sigma(u)}\right). \label{eq:charpolyexp}
\end{align}
Now we observe that 
\begin{eqnarray*}
\E_{s}(A_{s'}(i)_{u,v})=\E_{s}(A_{s+b}(i)_{u,v})=\begin{cases}
0 & \text{ if } (u,v)\not\in E,\\
\E_{s(u,v)}(i^{s(u,v)+b(u,v)}) & \text{ if }(u,v)\in E.
\end{cases}
\end{eqnarray*}
Moreover, for $(u,v)\in E$,
\[
\E_{s(u,v)}(i^{s(u,v)+b(u,v)}) = i^{b(u,v)} \E_{s(u,v)\in \{0,2\}}(i^{s(u,v)}) = i^{b(u,v)} \cdot 0 = 0.
\]
Since the values $s({u,v})$ for different edges $(u,v)$ are independent, we have that 
\[
\E_{s}\left(\prod_{u\in T} A_{s'}(i)_{u,\sigma(u)}\right) = 0
\]
if $\sigma(\sigma(u))\neq u$ for any $u\in T$. Therefore, the only non-zero terms in the sum (\ref{eq:charpolyexp}) are the ones corresponding to permutations $\sigma\in\sym(T)$ such that $\sigma(\sigma(u))=u$ for all $u\in T$. We note that such a $\sigma$ corresponds to a perfect matching $M = \{(u_{i_1}, v_{i_1}), (u_{i_2}, v_{i_2}), \dots, (u_{j_{|T|/2}}, v_{j_{|T|/2}})\}$ over the vertices of $T=\{u_{j_1},\ldots u_{j_{|T|/2}}, v_{j_1},\ldots v_{j_{|T|/2}}\}$ using the edges of $G$, i.e., $\sigma(u_{j_k}) = v_{j_k}$ and $\sigma(v_{j_k}) = u_{j_k}$ for every $k\in \{1,2,\ldots,|T|/2\}$. In this case,
\begin{align*}
\E_{s}\left(\prod_{u\in T} A_{s'}(i)_{u,\sigma(u)}\right)
&= \prod_{k=1}^{|T|/2} \E_{s}\left(A_{s'}(i)_{u_{j_k},\sigma(u_{j_k})}\cdot A_{s'}(i)_{v_{j_k},\sigma(v_{j_k})}\right)\\
&= \prod_{k=1}^{|T|/2} \E_{s}\left(A_{s+b}(i)_{u_{j_k}, v_{j_k}}\cdot A_{s+b}(i)_{v_{j_k}, u_{j_k}}\right)\\
&= \prod_{j=1}^{|T|/2} \E_{s}\left(i^{s(u_{j_k},v_{j_k})+b(u_{j_k},v_{j_k})} \cdot i^{s(v_{j_k},u_{j_k})+b(v_{j_k},u_{j_k})}\right)\\
&= \prod_{j=1}^{|T|/2} \E_{s}(1)
=1.
\end{align*}
The penultimate equality is because $s(v_j, u_j) = -s(u_j, v_j)$ and $b(v_j,u_j) = -b(u_j,v_j)$.

Consequently, we have that
\begin{align*}
\E_{s}(\det[xI-A_{s'}(i)]) 
&= \sum_{k=0}^n x^{n-k} \sum_{T\subseteq [n], |T|=k} \sum_{\substack{\sigma\in \sym(T):\\ \sigma(\sigma(u))=u, \sigma(u)\neq u\ \forall u\in T}} {\sgn(\sigma)}\\
&= \sum_{k=0}^n x^{n-k} \sum_{T\subseteq [n], |T|=k} \sum_{\substack{M:M \text{ is a perfect}\\ \text{matching over the vertices }T}} (-1)^{|T|/2}\\
&= \sum_{k=0}^{\lfloor n/2\rfloor} (-1)^k m_k x^{n-2k}
=\mu_G(x),
\end{align*}
where $m_k$ is the number matchings in $G$ with exactly $k$-edges.
\end{proof}

\section{Interlacing Families} \label{sec:interlacing}
We proceed in a fashion similar to the proof technique of Marcus et al. Let $b: E \to \{0,1\}$ be the shift function for some $2$-lift. For a shift function $s: E\to\{0,1\}$, we will use the shorthand notation $s_j = s(e_j)$ for $1\leq j \leq m$. For a fixed $b$, shift function $s$, and $t\in\C$, we define
\begin{align*}
f_{s_1,\ldots,s_m}^{(b,t)}(x)&:=\det[xI-A_{b+s}(t)]
\end{align*}
We will also need to fix a set $\mathcal{H}$ from which we can choose shift values. 
For the case of shift $3$-lifts, we will consider $b$ as the shift function that maps each edge to zero, and we will also set $\mathcal{H}=\{0,1,2\}$ and $t=\omega = e^{2\pi i/3}$. For the case of shift $4$-lifts, we will need to consider arbitrary $b$, and we will also set $\mathcal{H}=\{0,2\}$ and $t=i = \sqrt{-1}$.

Now, we define a family of polynomials: For every $k\in \{1,\ldots,m-1\}$ and every fixed partial assignment $s_1,\ldots, s_k \in \H$ for $s$, let 
\begin{align*}
f_{s_1,\ldots,s_k}^{(b,t)}(x)&:=\sum_{s_{k+1},\ldots,s_m\in \H}f_{s_1,\ldots, s_m}^{(b,t)}(x),
\end{align*}
i.e., the sum of $f_{s_1,\dots,s_m}^{(b)}(x)$ over all possible shift functions $s$ taking values in $\mathcal{H}$ that agree with the partial assignment for the first $k$ edges.
Also let 
\begin{align*}
f_{\emptyset}^{(b,t)}(x)&:=\sum_{s_{1},\ldots,s_m\in \H}f_{s_1,\ldots, s_m}^{(b,t)}(x). 
\end{align*}
Also, for ease of notation, we will omit $b$ whenever $b$ is understood to be the constant function that is zero on all edges, i.e., $f_{s_1,\dots, s_k}^{(t)} := f_{s_1, \dots, s_k}^{(b,t)}$ for any $k=1,\dots, m$ and $f_{\emptyset}^{(t)} = f_{\emptyset}^{(b,t)}$, where $b:E\to\{0,1\}$ is the function defined by $b(e) = 0$ for all $e\in E$.

We need the following theorem that shows the existence of a favorable path in the family of polynomials defined above. 
\begin{theorem}[Theorem 4.4 in \cite{MSS13}]\label{common-interlacing-implies-good-signing}
Fix $b:E\to\{0,1\}$ and $t\in\C$. Suppose that for every $k=0,1,\ldots,m-1$ and every $s_1,\ldots, s_k\in \H$, the polynomials
\[
\left(f_{s_1,\ldots,s_k,s_{k+1}=r}^{(b,t)}(x)\right)_{r\in \H}
\]
have positive leading coefficients, are real-rooted and have a \emph{common interlacing}. Then, there exists $s_1,\ldots, s_m\in \H$ such that the largest root of $f_{s_1,\ldots, s_m}^{(b,t)}(x)$ is at most the largest root of $f_{\emptyset}^{(b,t)}(x)$.
\end{theorem}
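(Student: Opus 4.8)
The plan is to run the \emph{method of interlacing families}: reduce the theorem to a one-step averaging lemma about finite collections of real-rooted polynomials sharing a common interlacing, and then apply that lemma repeatedly while descending the depth-$m$ tree whose nodes are the partial assignments $s_1,\ldots,s_k\in\H$. Fix $b$ and $t$ once and for all and abbreviate $f_{s_1,\ldots,s_k}:=f^{(b,t)}_{s_1,\ldots,s_k}$. Each leaf polynomial $f_{s_1,\ldots,s_m}(x)=\det[xI-A_{b+s}(t)]$ is monic of degree $n$, and each interior polynomial $f_{s_1,\ldots,s_k}=\sum_{s_{k+1}\in\H}f_{s_1,\ldots,s_{k+1}}$ has leading coefficient $|\H|^{\,m-k}>0$, so every polynomial in the family has degree exactly $n$ with positive leading coefficient.

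First I would prove the one-step lemma: if $p_1,\ldots,p_r$ are degree-$n$ real-rooted polynomials with positive leading coefficients and a common interlacing, then $\min_i(\text{largest root of }p_i)\le\text{largest root of }\sum_i p_i$. Let $g(x)=\prod_{j=1}^{n-1}(x-\alpha_j)$ with $\alpha_1\le\cdots\le\alpha_{n-1}$ be the common interlacer, and write $p_i(x)=\prod_{l=1}^n(x-\beta^{(i)}_l)$ with $\beta^{(i)}_1\le\cdots\le\beta^{(i)}_n$, so $\beta^{(i)}_j\le\alpha_j\le\beta^{(i)}_{j+1}$ for all $i,j$. At $x=\alpha_{n-1}$ the factors $\alpha_{n-1}-\beta^{(i)}_l$ are $\ge 0$ for $l\le n-1$ and $\le 0$ for $l=n$, so $p_i(\alpha_{n-1})\le 0$ for every $i$; hence $\sum_i p_i(\alpha_{n-1})\le 0$, and the largest root $\gamma$ of $\sum_i p_i$ satisfies $\gamma\ge\alpha_{n-1}$. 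Now suppose, for contradiction, that $\beta^{(i)}_n>\gamma$ for every $i$. Then for $\gamma<x<\min_i\beta^{(i)}_n$ we have $x\ge\alpha_{n-1}\ge\beta^{(i)}_{n-1}$ and $x<\beta^{(i)}_n$, so the same sign count gives $p_i(x)\le 0$ for all $i$ and thus $\sum_i p_i(x)\le 0$; but $\gamma$ is the largest root of $\sum_i p_i$ and the leading coefficient is positive, so $\sum_i p_i(x)>0$ on $(\gamma,\infty)$, a contradiction. Hence some $p_i$ has largest root $\le\gamma$. The same analysis at the points $\alpha_1<\cdots<\alpha_{n-1}$ and at $\pm\infty$ shows $\text{sgn}\big(\sum_i p_i(\alpha_j)\big)\in\{0,(-1)^{n-j}\}$, which by the intermediate-value theorem (together with the behavior at $\pm\infty$) forces $\sum_i p_i$ to have $n$ real roots; thus sums of such families are again real-rooted with positive leading coefficient.

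With the lemma in hand I would conclude as follows. The hypothesis of the theorem says exactly that at every node $(s_1,\ldots,s_k)$, $k=0,\ldots,m-1$ (the root being $k=0$), the children $\big(f_{s_1,\ldots,s_k,\,r}\big)_{r\in\H}$ are real-rooted, have positive leading coefficients, and have a common interlacing; the case $k=m-1$ gives real-rootedness of the leaves $f_{s_1,\ldots,s_m}$, and then by the by-product of the previous paragraph, inducting from $k=m$ down to $k=0$, every $f_{s_1,\ldots,s_k}$ in the family is real-rooted with positive leading coefficient. Starting from $f_\emptyset=\sum_{r\in\H}f_r$, the one-step lemma yields $r_1\in\H$ with $\text{largest root of }f_{r_1}\le\text{largest root of }f_\emptyset$; applying it to $f_{r_1}=\sum_{r\in\H}f_{r_1,r}$ yields $r_2$, and so on. After $m$ applications we obtain $s_1=r_1,\ldots,s_m=r_m\in\H$ with largest root of $f_{s_1,\ldots,s_m}$ at most the largest root of $f_\emptyset$, as desired.

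The main obstacle is the one-step lemma, specifically using the largest root $\alpha_{n-1}$ of the common interlacer to pin down the sign of each $p_i$ on $[\alpha_{n-1},\infty)$ and extract the contradiction; the descent is then a routine induction and real-rootedness at interior nodes falls out of the same sign count. One should also note the degenerate case in which some evaluation vanishes identically, but since all leading coefficients are strictly positive and the degree is fixed at $n$, this does not disrupt the argument.
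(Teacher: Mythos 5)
Your proposal is correct: the paper itself cites this statement from \cite{MSS13} without proof, and your argument is essentially the standard one there (their Lemma~4.2, that a commonly interlaced family has a member whose largest root is at most that of the sum, followed by descent through the tree of partial assignments). The one-step sign argument at the top root of the common interlacer and the induction are carried out correctly, and the only sketchy point --- real-rootedness of the interior sums --- is not actually needed, since the hypothesis already covers every non-root polynomial and your lemma's own IVT step supplies the existence of the largest real root of each sum.
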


We need the following result to show the existence of a common interlacing. 
\begin{lemma}[Corollary 1.36 in \cite{Fisk}]\label{conv-comb-implies-interlacing}
Let $f_1,\ldots, f_t$ be univariate polynomials of degree $n$ such that, for all $\alpha_1,\ldots, \alpha_t$ that are non-negative, the sum $\sum_{r=1}^t\alpha_r f_r$ has all real roots. Then $(f_r)_{r\in \{1,\ldots,t\}}$ have a common interlacing.
\end{lemma}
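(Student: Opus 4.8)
The final statement I am asked to prove is Lemma~\ref{conv-comb-implies-interlacing} (Corollary 1.36 in Fisk), which says: if $f_1,\dots,f_t$ are degree-$n$ univariate polynomials such that every nonnegative combination $\sum_r \alpha_r f_r$ is real-rooted, then the $f_r$ have a common interlacing. Let me sketch how I would prove this.

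\textbf{Setup and normalization.} First I would observe that without loss of generality all the $f_r$ have the same sign of leading coefficient (take $\alpha_r$ supported on a single index to see that each $f_r$ is itself real-rooted; then apply the hypothesis to $f_r + f_{r'}$ — if the leading coefficients had opposite signs, the degree could drop and we'd handle that, but more simply we can scale each $f_r$ by a positive constant to make all leading coefficients equal to $1$, which does not affect the hypothesis since positive scalings of a nonnegative combination are again nonnegative combinations). So assume each $f_r$ is monic of degree $n$ with real roots.

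\textbf{The pairwise case.} The heart of the matter is the case $t=2$: two monic real-rooted degree-$n$ polynomials $f_1, f_2$ such that $\alpha f_1 + \beta f_2$ is real-rooted for all $\alpha,\beta \ge 0$. I would show this forces $f_1$ and $f_2$ to have a common interlacer. The standard argument: consider $f_\lambda := (1-\lambda) f_1 + \lambda f_2$ for $\lambda \in [0,1]$. Each is monic real-rooted of degree $n$; write its roots as $r_1(\lambda) \le \cdots \le r_n(\lambda)$. These root functions are continuous in $\lambda$ (roots of a monic polynomial depend continuously on coefficients). Now I claim that as $\lambda$ moves from $0$ to $1$, the $j$-th root $r_j(\lambda)$ cannot cross the value $r_{j+1}(0)$ from below or $r_{j-1}(0)$ from above without creating a contradiction — more carefully, the key point is that for any real $x$ which is not a common root, the sign of $f_\lambda(x)$ as a function of $\lambda$ is monotone (it's affine in $\lambda$!), so $f_\lambda(x)$ can vanish for at most one $\lambda \in [0,1]$ unless $f_1(x) = f_2(x) = 0$. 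This means: between consecutive roots of $f_1$, say on the open interval $(r_j(0), r_{j+1}(0))$ where $f_1$ has constant sign $\epsilon = (-1)^{n-j}$, the polynomial $f_\lambda$ for $\lambda$ near $0$ still has that sign at the endpoints-adjacent points, and the affine-in-$\lambda$ behavior at each fixed point controls how roots can enter/leave the interval. I would use this to argue that $f_1$ and $f_2$ interlace "compatibly", i.e. that one can find $g$ of degree $n-1$ with roots $g_1 \le \cdots \le g_{n-1}$ such that $g$ interlaces both. Concretely: for each $j \in \{1,\dots,n-1\}$, the interval $I_j$ between the $j$-th and $(j+1)$-th smallest roots of $f_1$ must contain a root of $f_2$ or overlap appropriately — the cleanest route is to show directly that $\max(r_j(0), r_j(1)) \le \min(r_{j+1}(0), r_{j+1}(1))$ for all $j$, which is exactly the condition that a common interlacer exists, and this inequality follows from the affine-sign argument by supposing it fails and exhibiting some $\lambda^*$ and $x$ with $f_{\lambda^*}(x)=0$ but $f_{\lambda^*}$ having too few or misplaced sign changes, contradicting real-rootedness of degree exactly $n$.

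\textbf{From pairs to the general case.} Once the $t=2$ case is in hand, for general $t$ I would show that the existence of a common interlacer is equivalent to a system of inequalities on the ordered roots: writing $r^{(r)}_1 \le \cdots \le r^{(r)}_n$ for the roots of $f_r$, a common interlacer exists iff $\max_r r^{(r)}_j \le \min_r r^{(r)}_{j+1}$ for every $j \in \{1,\dots,n-1\}$. For each fixed $j$ and each pair $(r,r')$, the pairwise case applied to $f_r, f_{r'}$ gives $\max(r^{(r)}_j, r^{(r')}_j) \le \min(r^{(r)}_{j+1}, r^{(r')}_{j+1})$; taking the maximum over all pairs $r,r'$ (equivalently, using that $\max_r r^{(r)}_j$ is attained at some index $r_0$ and $\min_r r^{(r)}_{j+1}$ at some index $r_1$, then applying the pair $(r_0,r_1)$) yields the desired inequality. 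Then $g(x) := \prod_{j=1}^{n-1}(x - c_j)$ with any $c_j \in [\max_r r^{(r)}_j, \min_r r^{(r)}_{j+1}]$ is a common interlacer.

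\textbf{Main obstacle.} The genuinely delicate step is the pairwise case — specifically the argument that the affine-in-$\lambda$ sign behavior of $f_\lambda(x)$ at fixed real points $x$, combined with continuity of roots and the assumption that $f_\lambda$ has exactly $n$ real roots for \emph{every} $\lambda \in [0,1]$ (not just generically), forces the compatible interlacing inequalities. One must handle carefully the possibility of multiple roots and of $f_1, f_2$ sharing roots; the clean way is: suppose $r^{(1)}_{j+1} < r^{(1)}_j$ were violated in the sense $\max(r^{(1)}_j, r^{(2)}_j) > \min(r^{(1)}_{j+1}, r^{(2)}_{j+1})$; pick a point $x_0$ strictly between these two values, so $x_0 > r^{(a)}_j$ and $x_0 < r^{(b)}_{j+1}$ for the appropriate $a,b \in \{1,2\}$; then count sign changes / evaluate that $f_1(x_0)$ and $f_2(x_0)$ have opposite signs (since $x_0$ lies past $j$ roots of $f_a$ but before the $(j+1)$-th root of $f_b$), whence some convex combination $f_{\lambda_0}(x_0) = 0$, and a neighborhood-counting argument shows $f_{\lambda_0}$ then has fewer than $n$ real roots (counted with multiplicity) in a way that contradicts the hypothesis. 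I would present this as the core lemma and then assemble the general statement as above; everything after the pairwise case is bookkeeping with ordered root vectors.
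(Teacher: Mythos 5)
The paper does not prove Lemma~\ref{conv-comb-implies-interlacing} at all --- it is quoted from \cite{Fisk} (Corollary 1.36) --- so your proposal has to stand on its own, and its skeleton (reduce to the case $t=2$; show the gap condition $\max_r r^{(r)}_j \le \min_r r^{(r)}_{j+1}$ for every $j$; assemble a common interlacer from these gaps, with the general $t$ following from the pairwise case by the argmax/argmin trick) is indeed the standard route taken in the literature. The reduction from $t$ polynomials to pairs is fine. The problem is the crux, the pairwise case, where the contradiction is carried by the claim that at a point $x_0$ inside the ``violation gap'' the two polynomials must have opposite signs ``since $x_0$ lies past $j$ roots of $f_a$ but before the $(j+1)$-th root of $f_b$.'' That counting does not give opposite signs: the numbers of roots below $x_0$ differ by at least $2$, and a difference of $2$ preserves the sign of a monic real-rooted polynomial. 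Concretely, take $f_1 = x(x-10)(x-11)$ and $f_2=(x-1)(x-2)(x-3)$: there is no common interlacing (the violation is at $j=2$, since $r^{(1)}_2=10 > 3 = r^{(2)}_3$), yet at $x_0=5$ both $f_1(5)=150$ and $f_2(5)=24$ are positive. This pair of course fails the hypothesis of the lemma (as it must), but your sign argument never invokes the hypothesis at this point, so it cannot distinguish this configuration from an admissible one; as written it proves nothing. Moreover, even granting some $\lambda_0$ with $f_{\lambda_0}(x_0)=0$, that is not by itself a contradiction --- roots of the pencil $(1-\lambda)f_1+\lambda f_2$ do sweep through points --- so the concluding ``neighborhood-counting argument'' is exactly the missing content. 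The known proofs close this gap with a genuinely global argument: every root of $(1-\lambda)f_1+\lambda f_2$ with $\lambda\in(0,1)$ lies in the closed set where $f_1f_2\le 0$, and one then tracks, by continuity in $\lambda$ and a count per component interval of that set, how many roots each polynomial has in each such interval; the gap condition falls out of that bookkeeping, not out of a parity count at a single point.

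A secondary issue: your normalization ``scale each $f_r$ by a positive constant to make all leading coefficients equal to $1$'' silently assumes the leading coefficients all have the same (positive) sign, which positive scaling cannot arrange and which the statement genuinely needs: for $f_1=(x-1)(x-2)$ and $f_2=-(x-3)(x-4)$ every nonnegative combination is real-rooted (the discriminant of $\alpha f_1+\beta f_2$ is $\alpha^2+14\alpha\beta+\beta^2\ge 0$), yet there is no common interlacing. This is harmless for the paper, since Theorems~\ref{3-lifts-preserve-Ramanujan} and \ref{4-lifts-preserve-Ramanujan} apply the lemma only to polynomials with positive leading coefficients, but in your write-up the same-sign condition should be stated as a hypothesis rather than obtained ``without loss of generality.''
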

Multiplication by a non-zero constant does not change the roots of a polynomial. Hence, an equivalent condition to the one in the hypothesis of Lemma~\ref{conv-comb-implies-interlacing} is that for every non-negative $p_1,\ldots, p_t$ such that $\sum_{r=1}^t p_r=1$, the sum polynomial $\sum_{r\in \H} p_r f_r $ has all real roots.

The following lemmas prove the hypothesis of Theorem \ref{common-interlacing-implies-good-signing} for the cases of shift $3$-lifts and shift $4$-lifts.
\begin{lemma}\label{real-rootedness-for-3}
Fix $t=\omega = e^{2\pi i/3}$.
For every $k=0,1,\ldots, m-1$, every $s_1,\ldots, s_k\in \{0,1,2\}$, and every non-negative $\alpha_0,\alpha_1,\alpha_2$ such that $\alpha_0+\alpha_1+\alpha_2=1$, the polynomial
\[
\sum_{s_{k+1}\in \Ht} \alpha_{s_{k+1}} f_{s_1,\ldots,s_k,s_{k+1}}^{(t)}(x)
\]
has all real roots.
\end{lemma}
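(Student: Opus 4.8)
The plan is to reduce the real-rootedness claim to Theorem~\ref{stronger-real-rootedness} by exhibiting $f_{s_1,\dots,s_k,s_{k+1}}^{(t)}(x)$, and more generally the convex combination $\sum_{s_{k+1}\in\Ht}\alpha_{s_{k+1}}f_{s_1,\dots,s_k,s_{k+1}}^{(t)}(x)$, as an averaged determinant of a matrix built from rank-one Hermitian updates. First I would unfold the definition: expanding the inner sums over $s_{k+2},\dots,s_m$, the target polynomial equals $\sum_{s_{k+1},\dots,s_m\in\Ht}\big(\prod_{j=k+1}^{m}p_j^{s_j}\big)\det[xI-A_{s}(\omega)]$, where the first $k$ shift values are frozen to the given constants, $p_j^{r}=\alpha_r$ for $j=k+1$ and $p_j^{r}=1/3$ (or just any fixed nonnegative weights summing to $1$) for $j>k+1$. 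So it is an expectation of $\det[xI-A_s(\omega)]$ over a product distribution on the free edges, exactly the shape Theorem~\ref{stronger-real-rootedness} handles — provided I can write $xI-A_s(\omega)$ in the form $xI+\sum_j a_j^{r}(a_j^{r})^*$ up to harmless manipulations.

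The key step is the rank-one decomposition. For $t=\omega$ the matrix $A_s(\omega)$ is \emph{not} Hermitian (indeed $A_s(\omega)=A_s(\omega^2)^T=\overline{A_s(\omega)}^{\,T}$ only up to transpose, not conjugate), so I cannot directly apply the corollary to $A_s(\omega)$ itself. The standard fix, following Marcus–Spielman–Srivastava, is to pass to a Hermitian dilation or to use that $A_s(\omega)$ is similar to a Hermitian matrix. Concretely: since $\omega=e^{2\pi i/3}$ has $|\omega|=1$, write $\omega^{s(u,v)} = \zeta^{s(u,v)}$ with $\zeta$ a primitive cube root and note that for the oriented edge $(u,v)$ the $(u,v)$ entry is $\zeta^{s_j}$ and the $(v,u)$ entry is $\zeta^{-s_j}=\overline{\zeta^{s_j}}$. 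Hence $A_s(\omega)$ \emph{is} Hermitian! So $A_s(\omega)$ is Hermitian, and for each edge $e_j=(u,v)$ the contribution $\zeta^{s_j}E_{uv}+\overline{\zeta^{s_j}}E_{vu}$ is a rank-two Hermitian matrix that I would split as a difference, or better, complete: for a variable $t$ on the unit circle, $t E_{uv}+\bar t E_{vu} = v_j(t)v_j(t)^* - w_j w_j^*$ where $v_j(t) = \tfrac12(\,\bar t\, e_u + e_v)$-type vectors — I need to be a little careful to get genuine rank-one \emph{positive} pieces. The clean route is: add $d\cdot I$ (the regularity) and observe $dI + A_s(\omega) = \sum_{e_j=(u,v)} (e_u + \zeta^{s_j} e_v)(e_u+\zeta^{s_j}e_v)^*$ when $G$ is bipartite — wait, that overcounts the diagonal; the correct identity, used by MSS, is that for each edge we get one rank-one term $(\,e_u + \zeta^{s_j} e_v)(e_u+\zeta^{s_j}e_v)^*$ and the sum of these over all edges has diagonal $dI$ and off-diagonal exactly $A_s(\omega)$, so $dI + A_s(\omega) = \sum_j a_j(a_j)^*$ with $a_j$ depending on $s_j$ through $\zeta^{s_j}$. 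Then $xI - A_s(\omega) = (x+d)I - \sum_j a_j a_j^*$, and replacing $x$ by $y-d$ shows real-rootedness in $x$ is equivalent to real-rootedness in $y$ of $\det[yI - \sum_j a_j a_j^*]$; but Theorem~\ref{stronger-real-rootedness} gives real-rootedness of $\det[yI + \sum_j a_j a_j^*]$, which after $y\mapsto -y$ is the same statement. So I would apply Theorem~\ref{stronger-real-rootedness} with $\H=\Ht$, with $a_j^{r} = e_{u_j} + \zeta^{r} e_{v_j}$ for $j$ a free edge (and the frozen contributions for $j\le k$ folded into the constant shift of $x$, i.e.\ absorbed since they are fixed vectors contributing a fixed PSD matrix $C$; real-rootedness of $\det[yI - C - \sum_{\text{free }j}a_j a_j^*]$ still follows because $C$ is a fixed sum of rank-ones that can be appended to the list with degenerate weights).

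Carrying this out, the steps in order are: (1) expand the convex combination into the product-distribution average of $\det[xI-A_s(\omega)]$ over free edges, reading off the weights $p_j^r$; (2) verify $A_s(\omega)$ is Hermitian using $\zeta^{-s}=\overline{\zeta^{s}}$ on the reverse orientation; (3) establish the rank-one identity $dI+A_s(\omega)=\sum_{e_j=(u_j,v_j)}(e_{u_j}+\zeta^{s_j}e_{v_j})(e_{u_j}+\zeta^{s_j}e_{v_j})^*$ (here bipartiteness is what makes all cross terms land correctly and keeps each $a_j$ of the stated form); (4) absorb the $k$ frozen edges into a fixed PSD matrix, shift $x\mapsto y-d$ and negate, and invoke Theorem~\ref{stronger-real-rootedness} to conclude real-rootedness. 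The main obstacle I anticipate is step (3): getting the rank-one decomposition exactly right, in particular making sure the frozen edges are handled legitimately (Theorem~\ref{stronger-real-rootedness} averages over \emph{all} edges, so I must either include the frozen edges in $[m]$ with singleton support $\{s_j\}$ assigned probability $1$ and the rest of $\H$ probability $0$ — which is allowed since $p_j^r\in[0,1]$ and sums to $1$ — or argue directly that conditioning a real-rooted family on a sub-assignment preserves real-rootedness). The singleton-support trick is the cleanest and I would use it, noting that it also transparently handles the nonuniform weights $\alpha_0,\alpha_1,\alpha_2$ on edge $e_{k+1}$.
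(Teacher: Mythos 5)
Your proposal is correct and follows essentially the same route as the paper: apply Theorem~\ref{stronger-real-rootedness} with one rank-one vector per edge built from cube roots of unity, encode the frozen prefix $s_1,\ldots,s_k$ via singleton probabilities $p_j^{s_j}=1$, put the weights $\alpha_r$ on edge $k+1$ and uniform weights $1/3$ on the remaining edges, and absorb the regularity shift by replacing $x$ with $x+d$. The only cosmetic difference is the sign convention: the paper takes $a_j^r=e_u-\omega^{-r}e_v$ so that $\sum_j a_j^{t_j}(a_j^{t_j})^*=dI-A_t(\omega)$ exactly and no negation or transpose step is needed, whereas your choice $a_j^r=e_u+\zeta^r e_v$ yields $dI+A_t(\omega)^T$ and so requires the extra $y\mapsto -y$ reflection and transpose-invariance of the characteristic polynomial (also, the diagonal equaling $dI$ comes from $d$-regularity alone, not bipartiteness).
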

\begin{proof}
We will use Theorem \ref{stronger-real-rootedness} with appropriately chosen $a_j^r$ and $p_j^r$, for $j\in E$ and $r\in \H=\{0,1,2\}$. Fix $k\in \{0,1,\ldots,m-1\}$, partial assignment $s_1,\ldots, s_k\in\{0,1,2\}$ and values $\alpha_0,\alpha_1,\alpha_2\ge 0$ such that $\alpha_0+\alpha_1+\alpha_2=1$. Now, for edge $j=(u,v)$, we take $a_j^0=e_u-e_v$, $a_j^1=e_u-\omega^2e_v$, $a_j^2=e_u-\omega e_v$, where $e_u\in \reals^n$ is the indicator vector of vertex $u$. Moreover, we will take 
\begin{eqnarray*}
p_j^r=
\begin{cases}
1 &\text{ if } j\le k, s_j=r\\
0 &\text{ if } j\le k, s_j\in \{0,1,2\}\setminus \{r\}\\
\alpha_r &\text{ if } j= k+1,\\
1/3 &\text{ if } j\ge k+2,
\end{cases}
\end{eqnarray*}
for $r\in\{0,1,2\}$. For this setting, we have 
\begin{align*}
3^{m-(k+1)}P(x) &=3^{m-(k+1)}\sum_{t_1\ldots, t_m\in \{0,1,2\}} \left(\prod_{\substack{j\in [m]:\\t_j=0}}p_j^0\prod_{\substack{j\in [m]:\\ t_j=1}}p_j^1\prod_{\substack{j\in [m]:\\t_j=2}}p_j^2\right)f_{t_1\ldots, t_m}^{(t)}(x+d)\\
&=\sum_{t_1\ldots, t_{k+1}\in \{0,1,2\}} \left(\prod_{\substack{j\in [k+1]:\\t_j=0}}p_j^0\prod_{\substack{j\in [k+1]:\\t_j=1}}p_j^1\prod_{\substack{j\in [k+1]:\\t_j=2}}p_j^2\right)f_{t_1\ldots, t_{k+1}}^{(t)}(x+d)\\
&=\sum_{t_1\ldots, t_{k}\in \{0,1,2\}} \left(\prod_{\substack{j\in [k]:\\t_j=0}}p_j^0\prod_{\substack{j\in [k]:\\t_j=1}}p_j^1\prod_{\substack{j\in [k]:\\t_j=2}}p_j^2\right) \cdot \sum_{r=0}^2 \alpha_r f_{t_1,\ldots,t_k,s_{k+1}=r}^{(t)}(x+d)\\
&=\alpha_0f_{s_1\ldots, s_{k},s_{k+1}=0}^{(t)}(x+d)+\alpha_1f_{s_1\ldots, s_{k},s_{k+1}=1}^{(t)}(x+d)+\alpha_2f_{s_1\ldots, s_{k},s_{k+1}=2}^{(t)}(x+d).
\end{align*}
By Theorem \ref{stronger-real-rootedness}, we know that $P(x)$ is real-rooted. Hence, $P(x-d)$ is also real-rooted and we have the conclusion.
\end{proof}

\begin{lemma}\label{real-rootedness-for-4}
Fix $t = i = \sqrt{-1}$. 
For every $k=0,1,\ldots, m-1$, every choice of $b_1,\ldots, b_m\in \{0,1\}$, every partial assignment $s_1,\ldots, s_k\in \{0,2\}$, and every non-negative $\alpha_0,\alpha_2$ such that $\alpha_0+\alpha_2=1$, the polynomial
\[
\alpha_0 f_{s_1,\ldots,s_k,s_{k+1}=0}^{(b,t)}(x) + \alpha_2 f_{s_1,\ldots,s_k,s_{k+1}=2}^{(b,t)}(x)
\]
has all real roots.
\end{lemma}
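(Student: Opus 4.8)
The plan is to mirror the structure of the proof of Lemma~\ref{real-rootedness-for-3}, applying Theorem~\ref{stronger-real-rootedness} with $\H = \{0,2\}$, but now with rank-one vectors that encode the extra twist coming from the fixed 2-lift $b$. The key point is that $A_{b+s}(i)$ should be expressible as a sum of rank-one Hermitian matrices $a_j^r (a_j^r)^*$ over the edges $j$, one term per edge, where the choice $r \in \{0,2\}$ reflects the shift value $s_j$, and the dependence on $b_j$ is baked into the definition of $a_j^r$. Concretely, for an edge $j = (u,v)$ I would look for vectors $a_j^0, a_j^2 \in \C^n$ supported on coordinates $u,v$ such that $a_j^r (a_j^r)^*$ contributes $i^{b_j + r}$ in the $(u,v)$ entry and $i^{-(b_j+r)}$ in the $(v,u)$ entry, plus a fixed diagonal shift $d$ (the degree) that gets absorbed exactly as in Lemma~\ref{real-rootedness-for-3} via the substitution $x \mapsto x+d$. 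Since $i^{b_j+0}$ and $i^{b_j+2} = -i^{b_j}$ are negatives of each other, the two choices amount to flipping the sign of the off-diagonal block, so natural candidates are $a_j^0 = e_u - i^{-b_j} e_v$ and $a_j^2 = e_u + i^{-b_j} e_v$ (up to checking the exact entry bookkeeping and that $a_j^r (a_j^r)^* $ has the right diagonal contribution $e_u e_u^* + e_v e_v^*$).

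With such $a_j^r$ in hand, the second step is the bookkeeping that sets $p_j^r$ so that Theorem~\ref{stronger-real-rootedness}'s averaged determinant collapses to the desired two-term convex combination. Exactly as in Lemma~\ref{real-rootedness-for-3}: set $p_j^{s_j} = 1$ for $j \le k$, set $p_{k+1}^0 = \alpha_0$ and $p_{k+1}^2 = \alpha_2$, and set $p_j^0 = p_j^2 = 1/2$ for $j \ge k+2$. Then $2^{m-(k+1)} P(x)$ telescopes: the $j \le k$ factors pin down the partial assignment $s_1,\dots,s_k$, the $j \ge k+2$ factors average to reproduce $f_{s_1,\dots,s_k,s_{k+1}=r}^{(b,t)}(x+d)$ for each fixed $r$, and the $j = k+1$ factor weights these by $\alpha_0, \alpha_2$. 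Theorem~\ref{stronger-real-rootedness} then gives real-rootedness of $P(x)$, hence of $P(x-d) = \alpha_0 f_{s_1,\dots,s_k,s_{k+1}=0}^{(b,t)}(x) + \alpha_2 f_{s_1,\dots,s_k,s_{k+1}=2}^{(b,t)}(x)$, which is the claim.

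The main obstacle — and the only genuinely new point relative to Lemma~\ref{real-rootedness-for-3} — is verifying that $A_{b+s}(i)$ really does decompose as $\sum_j a_j^{s_j}(a_j^{s_j})^*$ minus $dI$ with a consistent choice of vectors, given that $i$ is a primitive \emph{fourth} root of unity rather than a cube root, and that $b$ is an \emph{arbitrary} function rather than identically zero. The subtlety is that $a_j^r(a_j^r)^*$ is automatically Hermitian, so its $(u,v)$ and $(v,u)$ entries are complex conjugates; one must check that $\overline{i^{b_j+r}} = i^{-(b_j+r)}$, which holds because $i^{b_j+r}$ is a unit complex number — this is precisely why restricting the new shift values to $\{0,2\}$ (so that $i^{s_j} = \pm 1$) and using $A_{s'}(i)$ rather than some non-Hermitian object is the right setup, and it is what makes the whole two-step reduction in Section~\ref{sec:shift4lift} go through. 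Once the decomposition is confirmed, everything else is a routine repetition of the shift-3 argument with $\{0,1,2\}$ replaced by $\{0,2\}$ and $\omega$ replaced by $i$.
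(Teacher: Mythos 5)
Your proposal is correct and takes essentially the same route as the paper: the paper's proof applies Theorem~\ref{stronger-real-rootedness} with $\H=\{0,2\}$, per-edge rank-one vectors encoding $b$ (the paper writes $a_j^0=e_u-(-i)^{b(u,v)}e_v$, $a_j^2=e_u+i^{b(u,v)}e_v$), and exactly the probability scheme $p_j^r$ you describe, followed by the shift $x\mapsto x+d$. The candidate vectors you propose, $a_j^0=e_u-i^{-b_j}e_v$ and $a_j^2=e_u+i^{-b_j}e_v$, are the consistent choice realizing $dI-A_{b+s}(i)=\sum_j a_j^{s_j}(a_j^{s_j})^*$ for a $d$-regular graph (note the off-diagonal contribution is $-i^{b_j+r}$ at $(u,v)$, not $+i^{b_j+r}$ as you state in passing), so the bookkeeping you defer goes through exactly as in the shift-3 case.
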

\begin{proof}
We will use Theorem \ref{stronger-real-rootedness} with appropriately chosen $a_j^r$ and $p_j^r$, for $j\in E$ and $r\in \H=\{0,2\}$. Fix $k\in \{0,1,\ldots,m-1\}$, partial assignment $s_1,\ldots, s_k\in\{0,2\}$, preliminary assignment $b_1,\ldots, b_m\in \{0,1\}$ and values $\alpha_0,\alpha_2\ge 0$ such that $\alpha_0+\alpha_2=1$. For edge $j=(u,v)$, we take $a_j^0=e_u-(-i)^{b(u,v)}e_v$ and $a_j^2=e_u+i^{b(u,v)}e_v$, where $e_u\in \reals^n$ is the indicator vector of vertex $u$. Moreover, we will take 
\begin{eqnarray*}
p_j^r=
\begin{cases}
1 &\text{ if } j\le k, s_j=r\\
0 &\text{ if } j\le k, s_j\in \{0,2\}\setminus \{r\} \\
\alpha_r &\text{ if } j= k+1,\\
1/2 &\text{ if } j\ge k+2,
\end{cases}
\end{eqnarray*}
for $r\in \{0,2\}$. For this setting, we have 
\begin{align*}
P(x)
&=\sum_{t_1\ldots, t_m\in \{0,2\}} \left(\prod_{j\in [m]:t_j=0}p_j^0\prod_{j\in [m]:t_j=2}p_j^2\right)f_{t_1\ldots, t_m}^{(b,t)}(x+d)\\
&=\frac{1}{2^{m-(k+1)}}\sum_{t_1,\ldots, t_{k+1}\in \{0,2\}} \left(\prod_{j\in [k+1]:t_j=0}p_j^0\prod_{j\in [k+1]:t_j=2}p_j^2\right)f_{t_1\ldots, t_{k+1}}^{(b,t)}(x+d)\\
&=\frac{1}{2^{m-(k+1)}}\sum_{t_1,\ldots, t_{k}\in \{0,2\}} \left(\prod_{j\in [k]:t_j=0}p_j^0\prod_{j\in [k]:t_j=2}p_j^2\right)\\
&\quad \quad \quad \quad \quad \quad \cdot \left(\alpha_0f_{t_1\ldots, t_{k},s_{k+1}=0}^{(b,t)} (x+d)+\alpha_2f_{t_1\ldots, t_{k},s_{k+1}=2}^{(b,t)}(x+d)\right)\\
&=\frac{1}{2^{m-(k+1)}}\left(\alpha_0f_{s_1\ldots, s_{k},s_{k+1}=0}^{(b,t)} (x+d)+\alpha_2f_{s_1\ldots, s_{k},s_{k+1}=2}^{(b,t)}(x+d)\right).
\end{align*}
Hence, 
\[
2^{m-(k+1)}P(x)=\alpha_0 f_{s_1,\ldots,s_k,s_{k+1}=0}(x+d)+\alpha_2 f_{s_1,\ldots,s_k,s_{k+1}=2}(x+d).
\]
By Theorem \ref{stronger-real-rootedness}, we know that $P(x)$ is real-rooted. Hence, $P(x-d)$ is also real-rooted and we have the conclusion.
\end{proof}

The following two theorems complete the proof of Theorem \ref{3-4-lifts-preserve-Ramanujan}.

\begin{theorem}\label{3-lifts-preserve-Ramanujan}
If $G=(V,E)$ is a $d$-regular bipartite Ramanujan graph, then there exists a shift $3$-lift of $G$ that is also $d$-regular, bipartite, and Ramanujan.
\end{theorem}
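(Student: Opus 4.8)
The plan is to run the method of interlacing polynomials on the family $\{f^{(\omega)}_{s_1,\dots,s_k}\}$ set up above, taking $b\equiv 0$, $\H=\{0,1,2\}$, and $t=\omega=e^{2\pi i/3}$. First I would fix the reduction. By Theorem~\ref{eigenvalues-of-shifts} with $k=3$, the spectrum of the shift $3$-lift $H$ determined by a shift function $s$ is the multiset union of the spectra of $A_s(1)=A$, $A_s(\omega)$, and $A_s(\omega^2)$. Because $s(i,j)\equiv -s(j,i)\pmod 3$ and $|\omega|=1$, the matrix $A_s(\omega)$ is Hermitian, so $f^{(\omega)}_{s_1,\dots,s_m}(x)=\det[xI-A_s(\omega)]$ is monic of degree $n$ with real roots, and $A_s(\omega^2)=A_s(\omega)^T$ has the same spectrum. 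Since $H$ is again bipartite and $d$-regular (every $k$-lift is), $\mathrm{spec}(H)$ is symmetric about $0$; as $\mathrm{spec}(A)$ is already symmetric and $\mathrm{spec}(A_s(\omega^2))=\mathrm{spec}(A_s(\omega))$, the spectrum of $A_s(\omega)$ is itself symmetric about $0$. Hence it suffices to exhibit a shift function $s:E\to\{0,1,2\}$ for which the largest root of $f^{(\omega)}_{s_1,\dots,s_m}$ is at most $2\sqrt{d-1}$: the eigenvalues of $H$ coming from $A$ are exactly those of $G$ (trivial, or obeying the Ramanujan bound since $G$ is Ramanujan), and all remaining eigenvalues of $H$ will then lie in $[-2\sqrt{d-1},2\sqrt{d-1}]$.

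Next I would verify the hypotheses of Theorem~\ref{common-interlacing-implies-good-signing}. Fix $k\in\{0,\dots,m-1\}$ and a partial assignment $s_1,\dots,s_k\in\{0,1,2\}$. Each polynomial $f^{(\omega)}_{s_1,\dots,s_k,s_{k+1}=r}(x)$ (for $r\in\{0,1,2\}$) is a sum of $3^{m-k-1}$ monic degree-$n$ polynomials, so it has degree exactly $n$ and leading coefficient $3^{m-k-1}>0$. Real-rootedness of each of the three polynomials is the special case of Lemma~\ref{real-rootedness-for-3} in which $(\alpha_0,\alpha_1,\alpha_2)$ is a standard basis vector, and real-rootedness of \emph{every} non-negative combination $\sum_{r\in\{0,1,2\}}\alpha_r f^{(\omega)}_{s_1,\dots,s_k,s_{k+1}=r}$ is exactly Lemma~\ref{real-rootedness-for-3}; feeding this into Lemma~\ref{conv-comb-implies-interlacing} gives a common interlacing of the three polynomials. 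Theorem~\ref{common-interlacing-implies-good-signing} then produces a full assignment $s_1,\dots,s_m\in\{0,1,2\}$ for which the largest root of $f^{(\omega)}_{s_1,\dots,s_m}$ is at most the largest root of $f^{(\omega)}_{\emptyset}$.

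Finally I would identify $f^{(\omega)}_{\emptyset}$ with the matching polynomial. Writing $\E_s$ for the expectation over $s$ uniform on $\{0,1,2\}^E$, Lemma~\ref{matching-polynomial-3-lifts} gives
\begin{align*}
f^{(\omega)}_{\emptyset}(x)
&=\sum_{s_1,\dots,s_m\in\{0,1,2\}}\det[xI-A_s(\omega)]
=3^m\,\E_s(\det[xI-A_s(\omega)])\\
&=3^m\,\mu_G(x),
\end{align*}
so the largest root of $f^{(\omega)}_{\emptyset}$ equals that of $\mu_G$, which is at most $2\sqrt{d-1}$ by Theorem~\ref{matching-polynomial-roots} since $G$ is $d$-regular. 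The shift $3$-lift given by the assignment $s_1,\dots,s_m$ produced above therefore has all of its new eigenvalues at most $2\sqrt{d-1}$ and, by the symmetry observed in the first paragraph, is Ramanujan; it is $d$-regular and bipartite because $k$-lifts preserve these properties. I do not expect a genuine obstacle here, since the substantive work is already isolated in the lemmas: the expected-characteristic-polynomial identity (Lemma~\ref{matching-polynomial-3-lifts}) and the real-rootedness of the barycentric combinations (Lemma~\ref{real-rootedness-for-3}, which in turn rests on Theorem~\ref{stronger-real-rootedness}). The only points requiring care are the degree and leading-coefficient bookkeeping demanded by Theorem~\ref{common-interlacing-implies-good-signing}, and the bipartiteness argument that upgrades an upper bound on the largest new eigenvalue to the two-sided Ramanujan bound.
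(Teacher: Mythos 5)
Your proposal is correct and follows essentially the same route as the paper: reduce via Theorem~\ref{eigenvalues-of-shifts} and the transpose relation $A_s(\omega^2)=A_s(\omega)^T$ to bounding the largest root of $\det[xI-A_s(\omega)]$, verify the hypotheses of Theorem~\ref{common-interlacing-implies-good-signing} using Lemmas~\ref{real-rootedness-for-3} and~\ref{conv-comb-implies-interlacing}, and identify $f^{(\omega)}_{\emptyset}=3^m\mu_G$ via Lemma~\ref{matching-polynomial-3-lifts} together with Theorem~\ref{matching-polynomial-roots}. Your explicit Hermitian-ness and spectrum-symmetry bookkeeping just spells out the bipartiteness remark the paper makes in Section~\ref{sec:definitions}, so there is no substantive difference.
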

\begin{proof}
Let $\mathcal{H} = \{0,1,2\}$ and $t = \omega = e^{2\pi i/3}$.
By Lemma \ref{matching-polynomial-3-lifts}, we have $f_{\emptyset}^{(t)}(x)= 3^m\mu_G(x)$. By Theorem \ref{matching-polynomial-roots}, the largest root of $\mu_G(x)$ is at most $2\sqrt{d-1}$. 
Let us consider the family of polynomials $f_{s_1,\ldots, s_k}^{(t)}(x)$. For every $k=0,1,\ldots, m-1$ and every $s_1,\ldots, s_k\in \{0,1,2\}$, the polynomials $f_{s_1,\ldots, s_k}^{(t)}(x)$ have positive leading-coefficients by definition. Lemmas \ref{conv-comb-implies-interlacing} and \ref{real-rootedness-for-3} imply that the polynomials
\[
\left(f_{s_1,\ldots,s_k,s_{k+1}=r}^{(t)}(x)\right)_{r\in \Ht}
\]
are also real-rooted and have a common interlacing. Thus, by Theorem \ref{common-interlacing-implies-good-signing}, there exists a shift function $s$ with $s_j\in \{0,1,2\}\,\forall j\in [m]$ such that the largest root of $\det[xI-A_s(\omega)]$ is at most $2\sqrt{d-1}$. 

Moreover $A_s(\omega)=A_s(\omega^2)^T$. Since the determinant of a matrix is preserved under the transpose operation, the characteristic polynomials of $A_s(\omega)$ and $A_s(\omega^2)$ are identical. Thus, for the shift $s$, the largest of the eigenvalues of $A_s(\omega)$ and $A_s(\omega^2)$ is at most $2\sqrt{d-1}$. Hence, by Theorem \ref{eigenvalues-of-shifts}, we have a shift 3-lift that preserves the Ramanujan property. It is straightforward to verify that any lift preserves the $d$-regular and  bipartiteness properties. 
\end{proof}

\begin{theorem}\label{4-lifts-preserve-Ramanujan}
If $G=(V,E)$ is a $d$-regular bipartite Ramanujan graph, then there exists a shift $4$-lift of $G$ that is also $d$-regular, bipartite, and Ramanujan.
\end{theorem}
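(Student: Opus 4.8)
The plan is to follow the two-step procedure already outlined before Lemma~\ref{matching-polynomial-4-lifts}, combining Marcus et al.'s $2$-lift result with the method of interlacing families applied to the restricted shift set $\H = \{0,2\}$. First I would invoke Theorem~\ref{thm:2-lift} to fix a shift function $b: E\to\{0,1\}$ such that every new eigenvalue of the corresponding $2$-lift is at most $2\sqrt{d-1}$; by Theorem~\ref{eigenvalues-of-shifts} (with $k=2$) this means exactly that the eigenvalues of $A_b(-1)$ are at most $2\sqrt{d-1}$. This $b$ is now fixed for the remainder of the argument, and I would set $t = i = \sqrt{-1}$ and $\H = \{0,2\}$.

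Next I would run the interlacing-families machinery on the polynomials $f_{s_1,\dots,s_k}^{(b,i)}(x)$ defined in Section~\ref{sec:interlacing}, exactly paralleling the proof of Theorem~\ref{3-lifts-preserve-Ramanujan}. By Lemma~\ref{matching-polynomial-4-lifts}, the root polynomial of the family satisfies $f_{\emptyset}^{(b,i)}(x) = 2^m \mu_G(x)$, and by Theorem~\ref{matching-polynomial-roots} its largest root is at most $2\sqrt{d-1}$. Each $f_{s_1,\dots,s_k}^{(b,i)}(x)$ has positive leading coefficient by definition (it is a sum of monic degree-$n$ polynomials), and Lemma~\ref{real-rootedness-for-4} together with Lemma~\ref{conv-comb-implies-interlacing} shows that for every $k$ and every partial assignment $s_1,\dots,s_k\in\{0,2\}$ the two polynomials $\big(f_{s_1,\dots,s_k,s_{k+1}=r}^{(b,i)}(x)\big)_{r\in\{0,2\}}$ are real-rooted and share a common interlacing. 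Hence Theorem~\ref{common-interlacing-implies-good-signing} applies and yields a shift function $s: E\to\{0,2\}$ for which the largest root of $f_{s_1,\dots,s_m}^{(b,i)}(x) = \det[xI - A_{s'}(i)]$, with $s' = s+b$, is at most the largest root of $f_{\emptyset}^{(b,i)}(x)$, i.e.\ at most $2\sqrt{d-1}$.

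Finally I would assemble the spectrum of the shift $4$-lift given by $s'$ using Theorem~\ref{eigenvalues-of-shifts} with the fourth roots of unity $1, i, -1, -i$: the matrix $A_{s'}(1) = A$ contributes the (trivial) eigenvalues $\pm d$ and, since $G$ is Ramanujan, all its other eigenvalues lie in $[-2\sqrt{d-1}, 2\sqrt{d-1}]$; the matrix $A_{s'}(i)$ has all eigenvalues at most $2\sqrt{d-1}$ by the previous paragraph, and $A_{s'}(-i) = A_{s'}(i)^T$ has the same characteristic polynomial, hence the same bound; and crucially $A_{s'}(-1) = A_b(-1)$ because $(-1)^{s'(u,v)} = (-1)^{b(u,v)}$ for every edge (using $s(u,v)\in\{0,2\}$), so its eigenvalues are at most $2\sqrt{d-1}$ by the choice of $b$. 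Since the $4$-lift is bipartite, its eigenvalues occur in pairs $(\lambda,-\lambda)$, so bounding the largest non-trivial eigenvalue by $2\sqrt{d-1}$ suffices; together with the automatic preservation of $d$-regularity and bipartiteness, this gives a $d$-regular bipartite Ramanujan shift $4$-lift. The main obstacle, and the point where the argument genuinely departs from the shift $3$-lift case, is step two: one must control $A_{s'}(i)$ and $A_{s'}(-1)$ \emph{simultaneously}, and the device that makes this possible is restricting the random shift $s$ to the subgroup $\{0,2\}$ so that $A_{s'}(-1)$ stays frozen at $A_b(-1)$ while the interlacing family over $\{0,2\}$ still has the matching polynomial as its average --- this is exactly what Lemmas~\ref{matching-polynomial-4-lifts} and~\ref{real-rootedness-for-4} are engineered to provide.
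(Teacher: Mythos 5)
Your proposal is correct and follows essentially the same route as the paper's own proof: fix $b$ via Theorem~\ref{thm:2-lift}, run the interlacing-family argument over $\H=\{0,2\}$ with $t=i$ using Lemmas~\ref{matching-polynomial-4-lifts}, \ref{conv-comb-implies-interlacing}, \ref{real-rootedness-for-4} and Theorem~\ref{common-interlacing-implies-good-signing}, and then observe $A_{s'}(-1)=A_b(-1)$ and $A_{s'}(-i)=A_{s'}(i)^T$ to bound all non-trivial eigenvalues via Theorem~\ref{eigenvalues-of-shifts}. No gaps; the extra remarks about the trivial eigenvalues of $A_{s'}(1)=A$ and the bipartite symmetry only make explicit what the paper leaves implicit.
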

\begin{proof}
Let $\mathcal{H} = \{0,2\}$ and $t = i = \sqrt{-1}$.
By Theorem \ref{thm:2-lift}, we have a shift $2$-lift with shift function $b:E\to\{0,1\}$ such that the maximum eigenvalue of $A_b(-1)$ is at most $2\sqrt{d-1}$. Let us fix this function $b$ and consider the family of polynomials $\left(f_{s_1,\ldots, s_k}^{(b,t)}(x)\right)_{s_1,\ldots,s_k\in \mathcal{H}}$. By Lemma \ref{matching-polynomial-4-lifts}, we have that $f_{\emptyset}^{(b,t)}(x)= 2^m \mu_G(x)$. By Theorem \ref{matching-polynomial-roots}, all roots of $\mu_G(x)$ are at most $2\sqrt{d-1}$. For every $k=0,1,\ldots, m-1$ and every $s_1,\ldots, s_k\in \mathcal{H}$, the polynomials $f_{s_1,\ldots, s_k}^{(b,t)}(x)$ have positive leading coefficients by definition. By Lemmas \ref{conv-comb-implies-interlacing} and \ref{real-rootedness-for-4}, the polynomials 
\[
\left(f_{s_1,\ldots, s_k, s_{k+1}=r}^{(b,t)}(x)\right)_{r\in \mathcal{H}} 
\]
are real-rooted and have a common interlacing. Therefore, by Theorem \ref{common-interlacing-implies-good-signing}, we have a shift function $s$ with $s_j\in \{0,2\}\,\forall j\in [m]$ such that the largest root of $\det[xI-A_{b+s}(i)]$ is at most $2\sqrt{d-1}$. Thus, by considering $s'=b+s$, we have a shift function $s': E\to \{0,1,2,3\}$ such that the largest root of $\det[xI-A_{s'}(i)]$ is at most $2\sqrt{d-1}$. 

Furthermore, $A_{s'}(-i)=A_{s'}(i)^T$ and $A_{s'}(-1)=A_b(-1)$ since $(-1)^{s'(e)} = (-1)^{s(e) + b(e)} = (-1)^{b(e)}$ for all $e\in E$. Therefore, the eigenvalues of $A_{s'}(-1)$, $A_{s'}(i)$ and $A_{s'}(-i)$ are at most $2\sqrt{d-1}$. Hence, by Theorem \ref{eigenvalues-of-shifts}, we have a shift 4-lift given by $s'$ that preserves the Ramanujan property. 
\end{proof}

\section{Discussion} \label{sec:discussion}
In this work, we have considered an alternative approach to constructing Ramanujan graphs efficiently. Instead of repeatedly taking $2$-lifts, we have suggested taking a shift $k$-lift for $k$ growing exponentially in $n$, where $n$ is the number of vertices in the base graph. The existence of a shift $k$-lift that preserves the Ramanujan property for such values of $k$ would immediately lead to a faster algorithm than repeatedly taking $2$-lifts. We take a first step towards proving the existence of such shift $k$-lifts by showing it for $k=3$ and $k=4$. 

A more general approach would be to consider other subfamilies of permutations over $k$ elements as opposed to considering shift permutations. Subfamilies that arise from subgroups of the permutation group $S_k$ have a convenient characterization of the new eigenvalues of the lifted graph. In particular, the family of shift permutations correspond to the subgroups $\Z/k\Z$ of $S_k$~\cite{MS95}. In order to obtain faster construction of Ramanujan graphs from alternative subgroups $\Gamma$, we need the following requirement on the subgroup $\Gamma$: (1) the size of the subgroup $\Gamma$ grows polynomial in $k$, and (2) there exists a $k$-lift in the subgroup preserving the Ramanujan property of $n$-vertex Ramanujan graphs for $k$ being superpolynomial in $n$. Natural candidates to consider are abelian subgroups of order $k$.\\



\noindent {\bf Acknowledgments.}
The authors would like to thank Salil Vadhan and Jelani Nelson for many helpful conversations related to the topic.

\bibliographystyle{abbrv}
\bibliography{references}

\end{document}